\shorttitle{Singular distribution functions} 
\newcommand{\clr}{\color{black}} 
\newcommand{\rmP}{\mathrm{P}}
\renewcommand\subsubsection{\@startsection{subsubsection}{3}{\z@}%
                       {-18\p@ \@plus -4\p@ \@minus -4\p@}%
                       {4\p@ \@plus 2\p@ \@minus 2\p@}%
                       {\normalfont\normalsize\bfseries\boldmath
                        \rightskip=\z@ \@plus 8em\pretolerance=10000 }}
\numberwithin{equation}{section}  
\begin{document}

{\clr \title{Singular distribution functions for random\\ variables with stationary digits}} 

\authorone[Aalborg University]{Horia Cornean} 
\authortwo[University of Virginia]{Ira W. Herbst}
\authorone[Aalborg University]{Jesper M{\O}ller}
\authorone[Aalborg University]{Benjamin B. St{\O}ttrup}
\authorone[Aalborg University]{Kasper S. S{\O}rensen}

\addressone{Department of Mathematical Sciences, Aalborg University, Skjernvej 4A, 9220 Aalborg, Denmark} 
\addresstwo{Department of Mathematics, University of Virginia, Charlottesville, VA 22903, USA}

\begin{abstract}
Let $F$ be the cumulative distribution function (CDF) of the base-$q$ expansion $\sum_{n=1}^\infty X_n q^{-n}$, where $q\ge2$ is an integer and $\{X_n\}_{n\geq 1}$ is a stationary stochastic process with state space $\{0,\ldots,q-1\}$.
In a previous paper we characterized the absolutely continuous and the discrete components of $F$.  In this paper we study special cases of models, including 
stationary Markov chains of any order and stationary renewal point processes, where we establish a law of pure types: $F$ is then either a uniform or a singular CDF on $[0,1]$. Moreover, we study   
 mixtures of such models. In most cases expressions and plots of $F$ {\clr are given}.
\end{abstract}

\keywords{Bernoulli scheme; Cantor function; digit expansions of random variables in different bases;
Ising model; law of pure types;
Markov chain; mixture distribution; Poisson process; renewal process; Riesz-Nagy function.} 

\ams{60G10; 60G30}{60G55; 60J10; 60K05} 

\section{Introduction}\label{s:intro}

A function $F\colon \mathbb{R}\mapsto \mathbb{R}$ for which $F'=0$ Lebesgue almost everywhere on $\mathbb{R}$ is called \emph{singular} (that is, $F'(x)$ exists and is 0 for all $x\in \mathbb{R}\setminus N$ where $N$ is a Lebesgue nullset). For a detailed account of the early history of singular functions, see \cite{Paradis2011}, \cite{Amo2012}, and the references therein. The first and most well-known example of a singular function is the Cantor function \cite{Cantor1884,DOVGOSHEY2006}. Other well-known examples include Minkowski's question-mark function~\cite{Minkowski1904,Denjoy1932,Denjoy1934} and the Riesz-Nagy functions~\cite{Riesz1955}. The latter functions have been treated numerous times in the literature, also before their appearance in \cite{Riesz1955}, 
see e.g.\ \cite{Salem1943}, \cite{Takacs1978}, and \cite{Paradis2007}. 
In more recent times many new constructions of singular functions and generalizations of the well-known examples listed above have appeared in the literature~\cite{Paradis2007,Paradis2011,Amo2012,Okamoto2007,Kairies1997,Wen1998,Sanchez2012,Sanchez2014,Sanchez2016}. 

In a probabilistic setting singular functions are often constructed as follows~\cite{Billingsley1995,Harris1955,Dym1968} (an exception is the paper~\cite{Wen1998}). Let $q\geq 2$ be an integer and $\{X_n\}_{n\geq 1}$ a stochastic process with state space $\{0,\dots,q-1\}$. Define a stochastic variable $X$ on $[0,1]$ by the following base-$q$ expansion with digits $X_1,X_2,\ldots$:
\begin{equation*}
    X\coloneqq (0.X_1 X_2\dots )_q\coloneqq\sum_{n=1}^\infty X_n q^{-n}.
\end{equation*}
Throughout this paper,
\[ 
    F(x)\coloneqq\rmP(X\leq x), \qquad x\in \mathbb{R},
\] 
is the CDF 
of $X$. This is a monotone function, so $F'$ exists almost everywhere and as we shall see in many cases 
$F$ is singular. 
The simplest situation is to assume that $\{X_n\}_{n\geq 1}$ is a Bernoulli scheme, i.e., the $X_n$'s are independent and identically distributed (IID). Then, for $q=2$ and $p\coloneqq\rmP(X_n=0)$, $F$ is a Riesz-Nagy function \cite{Billingsley1995,Takacs1978}, which is the uniform CDF on $[0,1]$ if $p=\tfrac12$, singular continuous if $p\not\in\{0,\tfrac12,1\}$, and concentrated at 0 or 1 if $p=0$ or $p=1$, respectively. 
If instead $q=3$, $\rmP(X_n=1)=0$, and $\rmP(X_n=0)=\rmP(X_n=2)=\tfrac12$, then $F$ is the Cantor function \cite{Billingsley1995}. 

\subsection{Stationary digits}\label{s:setting}

In \cite{part1} we considered the case where $\{X_n\}_{n\geq 1}$ is \emph{stationary}, i.e., $\{X_n\}_{n\ge1}$ and $\{X_n\}_{n\ge2}$ are identically distributed. For short \emph{stationarity} refers to this setting. Below we summarize some characterisation results for stationarity which motivate the objective of the present paper.
For this we 
recall that for any $x\in[0,1]$, there exists a sequence $\{x_n\}_{n\ge1} \subseteq \{0,\dots,q-1\}$ such that $x$ is given by the base-$q$ expansion $x=(0.x_1x_2\ldots)_q$. This expansion is unique except when $x$ is a \emph{base-$q$ fraction} (in $(0,1)$), i.e., when there exist $n\in\mathbb N$ and $x_1,\dots,x_n\in \{0,\dots,q-1\}$ such that $x_n>0$ and $x=(0.x_1\dots x_n00\dots)_q$.
This ambiguity 
plays no role when $\{X_n\}_{n\geq 1}$ is stationary, since then $X$ is almost surely not a base-$q$ fraction (see also Remark~2.2). 

Theorem~1 in \cite{part1} established that stationarity 
is equivalent to that for all base-$q$ fractions $x\in(0,1)$, the functional equation
\begin{equation}\label{e:3}
F(x) = F(0)+\sum_{j=0}^{q-1} [F((x+j)/q)-F(j/q)]
\end{equation}
is satisfied.
Theorem~3 in \cite{part1} showed that stationarity is equivalent to that 
$F$ is a mixture of three CDFs $F_1,F_2,F_3$  whose corresponding probability distributions are mutually singular measures concentrated on $[0,1]$ and so that $F_1,F_2,F_3$ satisfy 
the following statements {\rm (I)-(III)}: 
\begin{enumerate} 
\item[{\rm (I)}]
$F_1$ is the uniform CDF on $[0,1]$, that is, $F_1(x)=x$ for $x\in[0,1]$. 
\item[{\rm (II)}] $F_2$ is a mixture 
of an at most countable number of CDFs of the form
	\begin{align}\label{e:F2type}
	F_{s_1,\ldots,s_k}(x)\coloneqq\frac{1}{k}\sum_{j=1}^{k} H(x-s_j),\qquad x\in\mathbb R,
	\end{align}
	where $H$ denotes the Heaviside function, $s_1,\dots,s_k\in [0,1]$ are pairwise distinct numbers such that $qs_{j+1}-s_j\in\{0,\dots,q-1\}$ for $j=1,\dots,k$, and $s_{k+1}\coloneqq s_1$.
\item[{\rm (III)}]
 $F_3$ is singular continuous and satisfies \eqref{e:3}.
 \end{enumerate}

It is easily verified that $F=F_1$ 
if and only if the $X_n$'s are IID and uniformly distributed on $\{0,\dots,q-1\}$. Note that 
the discrete part $F_2$ is highly constrained as $F_{s_1,\dots,s_k}$ in \eqref{e:F2type} can be obtained from
a $(k-1)$'th order Markov chain which
effectively corresponds to a uniform distribution on a state space consisting of $k$ states, cf.\ Corollary~2.1 in \cite{part1}. So the non-trivial part above is (III), and 
 in many interesting cases of stationary stochastic processes $\{X_n\}_{n\geq 1}$,  as we shall see, $F=F_3$.  

\subsection{Our contribution} 

The main interest in the present paper is to study the properties of $F$ under various classes of stationary models for $\{X_n\}_{n\ge1}$, and in particular to 
establish a better understanding of 
singular continuous CDFs which satisfy \eqref{e:3}.
Figure~1 shows plots of $F$ to illustrate how different it can look depending on which parameter values we choose for different parametric models of $\{X_n\}_{n\ge1}$ 
(Examples~\ref{ex:3}, \ref{ex:symmetry}, \ref{ex:6}, \ref{ex:4}, and \ref{ex:MCbeta} below, where it is explained which curves correspond to which parameter choices for the models).
For all cases of $F$ in Figure~1, $F$ is strictly increasing on $[0,1]$ and apart from 
 the uniform CDF on $[0,1]$ (the straight line in (a)), $F$ is singular continuous on $[0,1]$.

\thisfloatsetup{subfloatrowsep=myfill,captionskip=0pt,rowpostcode =largevskip}%
  \begin{figure}[!htbp]
    \captionsetup[subfigure]{justification=centering}
      \ffigbox[\textwidth]{%
  \begin{subfloatrow}[2]%
    \ffigbox[0.45\textwidth]{\caption{}\label{fig:a}}{%
      \includegraphics[width=0.95\linewidth]{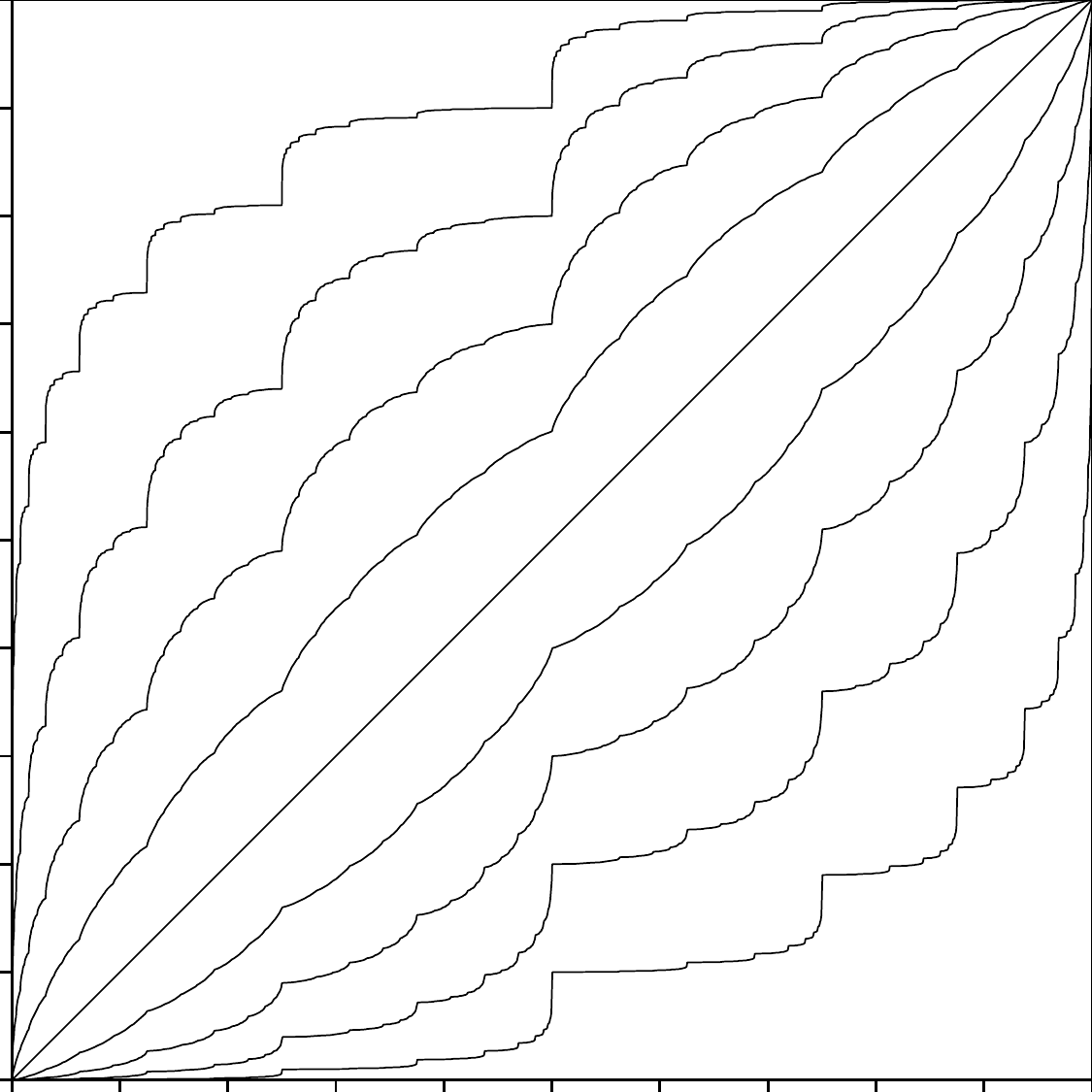}}
    \ffigbox[0.45\textwidth]{ \caption{}\label{fig:b}}{%
      \includegraphics[width=0.95\linewidth]{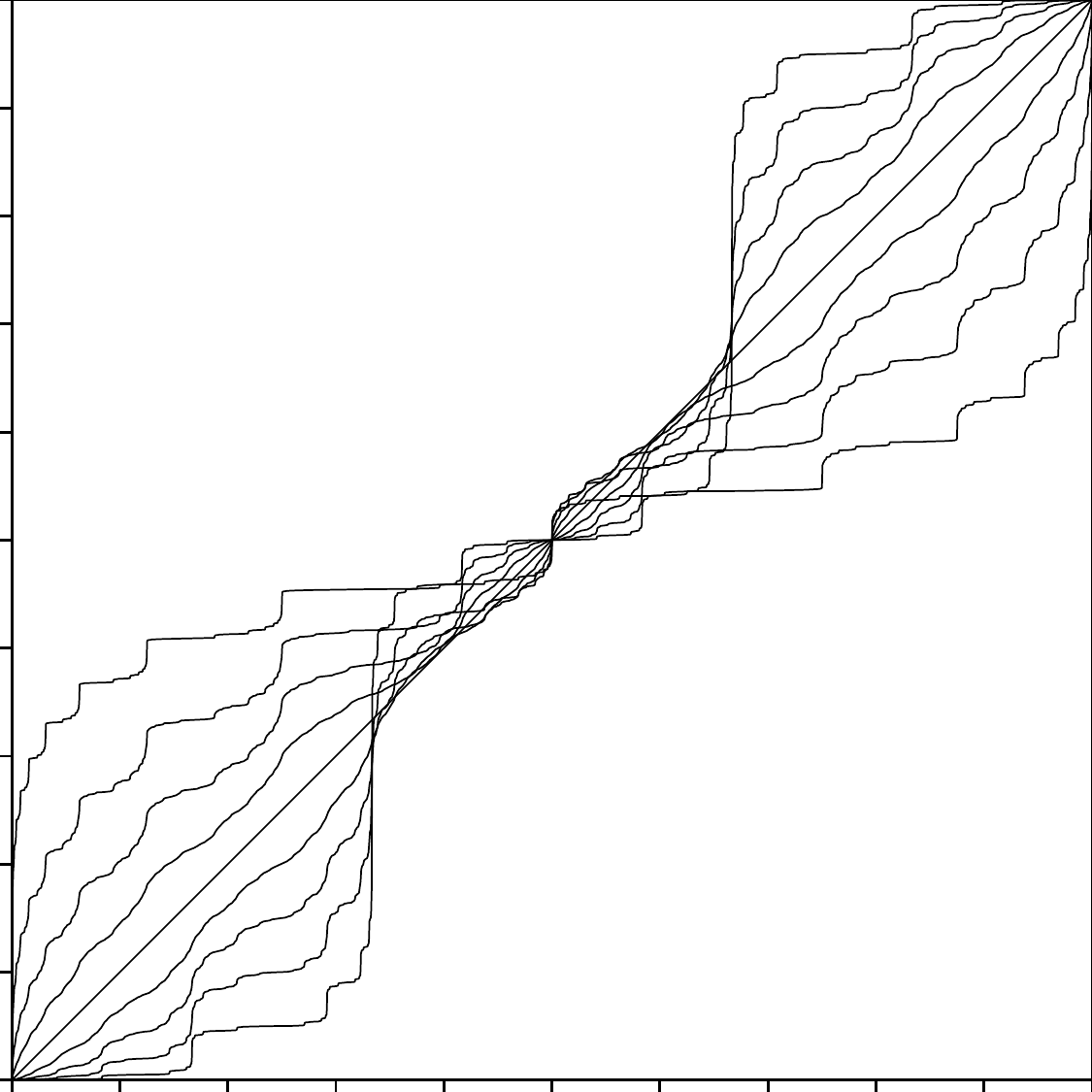}}
  \end{subfloatrow}
    \begin{subfloatrow}[2]%
    \ffigbox[0.45\textwidth]{\caption{}\label{fig:c}}{%
      \includegraphics[width=0.95\linewidth]{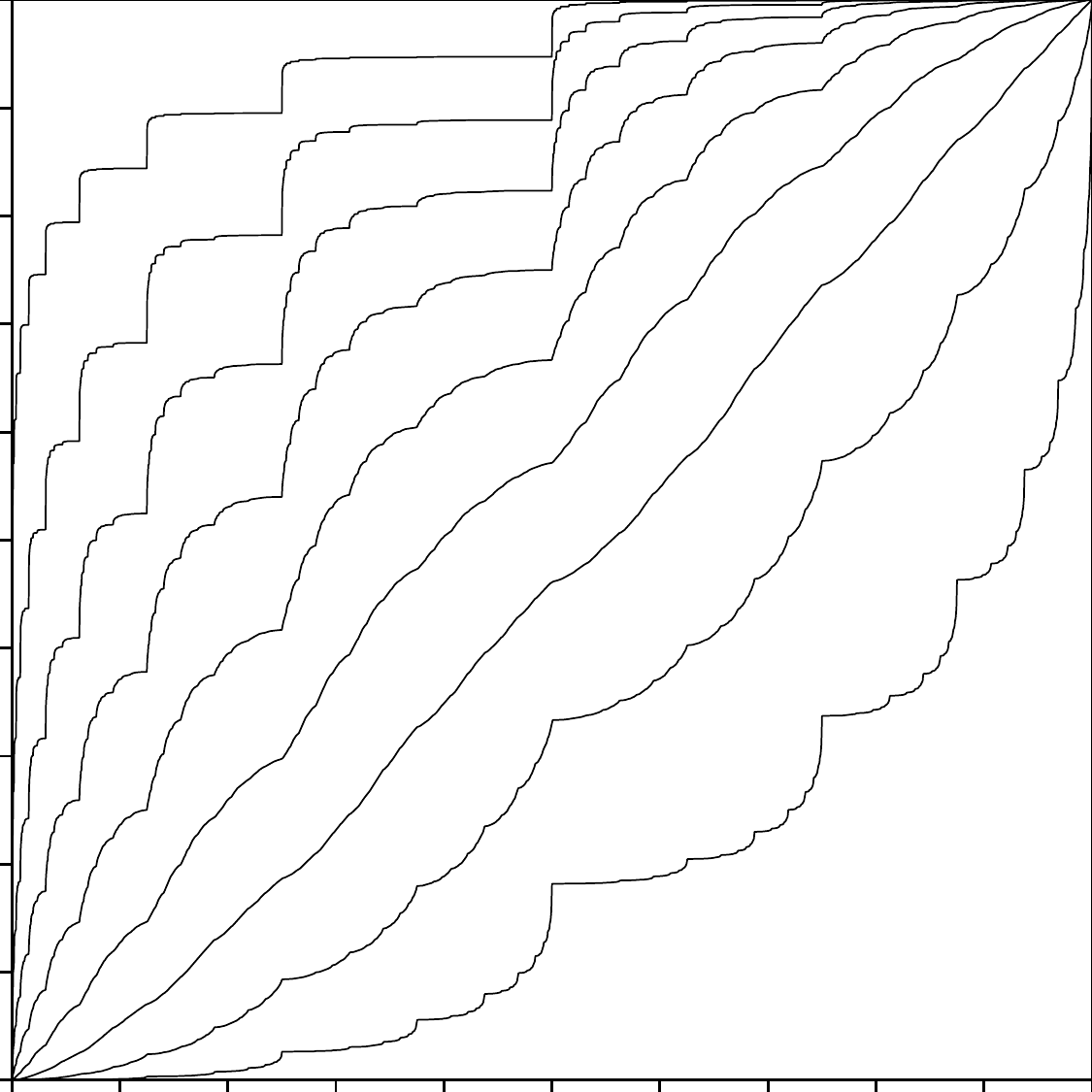}}
    \ffigbox[0.45\textwidth]{ \caption{}\label{fig:d}}{%
      \includegraphics[width=0.95\linewidth]{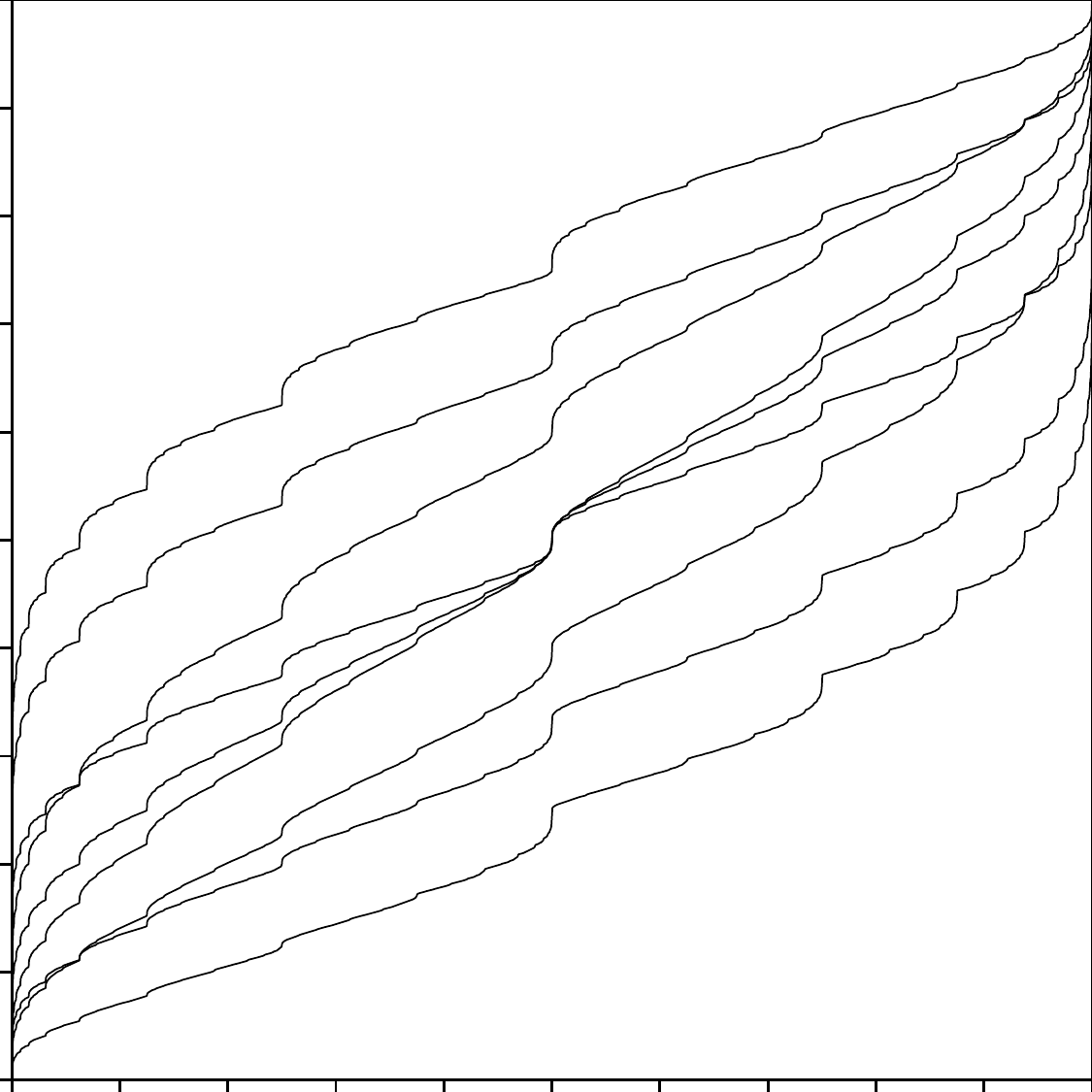}}
  \end{subfloatrow}
  
    \renewlengthtocommand\settowidth\Mylen{\subfloatrowsep}\vskip\Mylen
    \TopFloatBoxes\floatsetup[subfigure]{heightadjust=none}
    \begin{subfloatrow*}
    \ffigbox[0.45\textwidth][][t]{\subcaption{}}{\includegraphics[width=0.95\linewidth]{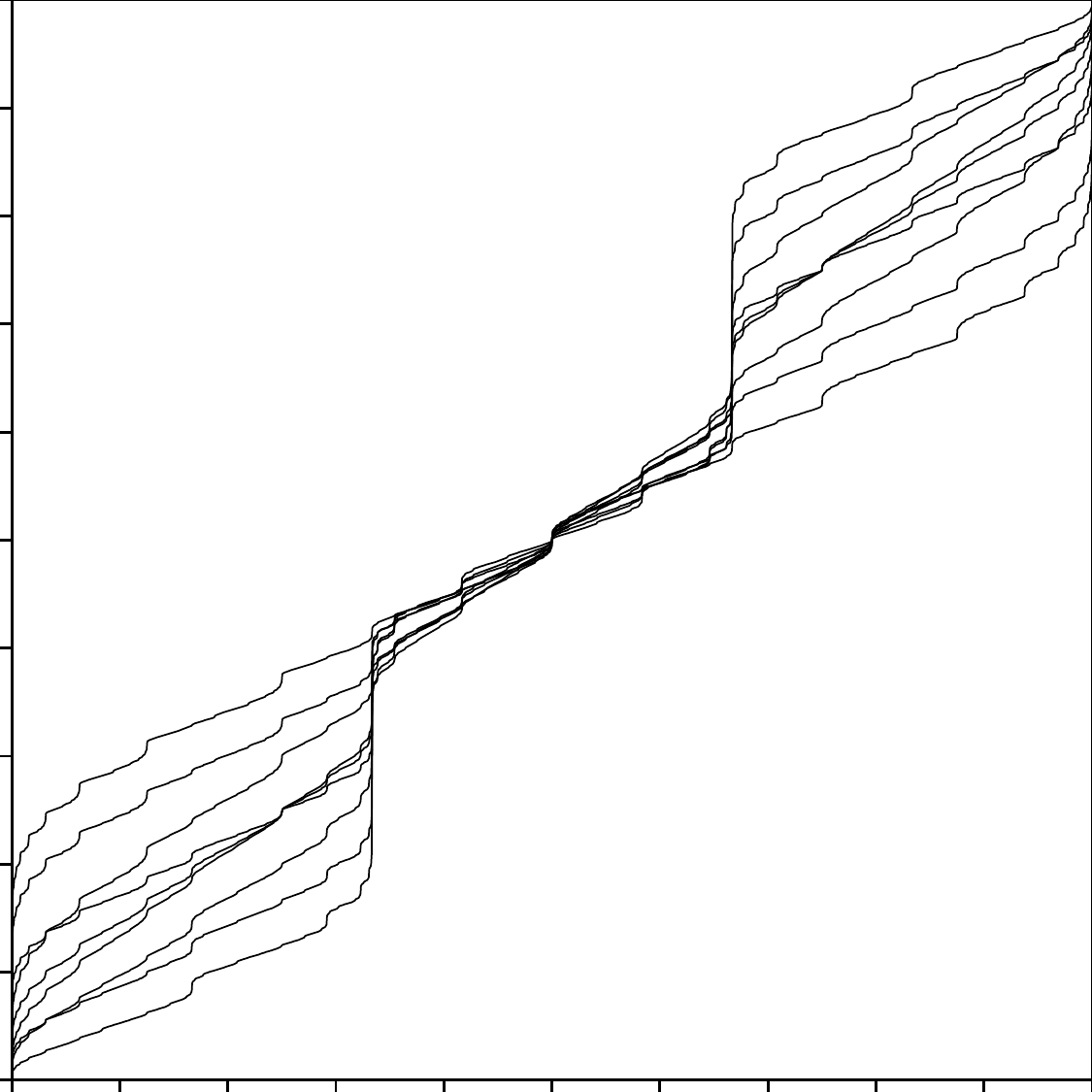}}
    \ffigbox[0.45\textwidth][][t]{}{\RawCaption{\caption{Plots of $F$ (restricted to $[0,1]$) when $\{X_n\}_{n\ge1}$ is either (a) a stationary Poisson process  (Example~\ref{ex:3}),   
    (b) an Ising model (Example~\ref{ex:symmetry}),
    (c) a renewal point processes (Example~\ref{ex:6}),
    (d) a mixed Poisson process (Example~\ref{ex:4}),
    or (e) a mixed 
    Markov chain (Example~\ref{ex:MCbeta}).
      The curves in (a)--(e) depend on the choice of parameter values as detailed in Examples~\ref{ex:3}, \ref{ex:symmetry}, \ref{ex:6}, \ref{ex:4}, and \ref{ex:MCbeta}. The axes run from 0 to 1.}\label{...}}}
    \end{subfloatrow*}
    }{}
\end{figure}

Our paper is organized as follows. 
Section~\ref{s:main results1} establishes in a general setting a useful expression of $F$ in terms of the finite dimensional distributions of $\{X_n\}_{n\geq 1}$. Further, assuming stationarity we provide sufficient and necessary conditions for the finite dimensional distributions in order to ensure either continuity of $F$ or that $F$ is strictly increasing at a point in $[0,1]$,
and we give useful results for the derivative of $F$. Then we turn to our main results for general model classes and study various examples: Section~\ref{s:MC-general} considers when 
$\{X_n\}_{n\geq 1}$ is a stationary Markov chain of some fixed order, where we show a law of pure type: either $F=F_1$ or $F$ is singular. Moreover, Section~\ref{s:MC-general} establishes sufficient conditions for continuity of $F$. These results together with those in Section~\ref{s:main results1} for monotonicity of $F$ and the expression of $F$ are exemplified for Bernoulli schemes (chains of order 0, i.e., when the $X_n$ are IID) and for binary Markov chains of order 1.
We also
 show that any singular continuous CDF can be approximated 
 by the CDF for a base-$q$ expansion with digits given by a stationary Markov chain of a sufficiently high order. 
 Section~\ref{s:renewal-general} considers when $\{X_n\}_{n\geq 1}$ is binary and $\{n\mid X_n=1\}$ constitutes 
 a stationary renewal process, where we establish similar results for $F$ as in Section~\ref{s:MC-general} and consider an example where $\{n\mid X_n=1\}$ is a determinantal point process. 
 Section~\ref{s:mixture-general} deals with mixture models, in particular  mixtures of stationary Markov chain of the same order and mixtures of stationary renewal processes, where we specify continuity and monotonicity conditions for $F$, consider its derivative $F'$, and study various concrete cases by expanding the examples of Sections~\ref{s:MC-general} and \ref{s:renewal-general}. Section~\ref{s:conclusion} summaries our findings in the previous sections and discusses some open problems. Finally, Appendix~A.1-A.11 contain the proofs of results in Sections~\ref{s:main results1}--\ref{s:mixture-general} together with some related technical results.

%

\section{Characterization of $F$ in terms of finite dimensional probabilities}\label{s:main results1}


We use the following terminology and notation. 
Define finite dimensional distributions of $\{X_n\}_{n\geq 1}$ by 
\begin{equation}\label{e:pdef}
    p(x_1,\ldots,x_n)\coloneqq\mathrm{P}(X_1=x_1,\dots,X_n=x_n),\quad n\in\mathbb N,\ x_1,\ldots,x_n\in\{0,\ldots,q-1\}.
\end{equation}
Denote the set of base-$q$ fractions in $(0,1)$ by $\mathbb Q_q$.
If $x=(0.x_1\ldots x_n00\ldots)_q\in\mathbb Q_q$ with $x_n>0$, we also have $x=(0.x_1\dots,x_{n-1}(x_n-1)(q-1)(q-1)\ldots)_q$ and we refer to $n$ as the \emph{order of $x$}, to $(0.x_1\dots x_n 00\ldots)_q$ as the \emph{terminating expansion of $x$}, and to $(0.x_1\ldots x_{n-1}(x_n-1)(q-1)(q-1)\dots)_q$ as the \emph{non-terminating expansion of $x$}.
If $q=2$ and $x=(0.x_1x_2\ldots)_2\in[0,1]$, define the corresponding \emph{point configuration} to $x$ by $y=\{n\in\mathbb N\,|\,x_n=1\}$ and the corresponding \emph{point process} to $\{X_n\}_{n\ge1}$ by 
\[Y\coloneqq\{n\in\mathbb N\,|\,X_n=1\}.\] 
Note that $\{X_n\}_{n\ge1}$ is determined by $Y$, since $X_n=1_Y(n)$. Furthermore, if $q=2$ and $n\in\mathbb N$, define $y_0\coloneqq\emptyset$, $Y_0\coloneqq\emptyset$, $y_n\coloneqq y\cap\{1,\ldots,n\}$, and
$Y_n\coloneqq Y\cap\{1,\ldots,n\}$. 

\begin{proposition}\label{t:useful}
For any $x=(0.x_1x_2\ldots)_q\in[0,1]$, where the non-terminating expansion is chosen if $x\in\mathbb Q_q$, we have
\begin{equation}\label{e:F-id1}
F(x)= 
\sum_{n\ge1:\,x_n\ge1}\sum_{k=0}^{x_n-1}p(x_1,\ldots,x_{n-1},k) + \rmP(X=x)
\end{equation}
(with $p(x_1,\ldots,x_{n-1},k)=p(k)$ if $n=1$).
If in addition $q=2$ and $y$ is the point configuration corresponding to $x$,  
then
\begin{equation}\label{e:F-id}
F(x)= 
\sum_{n\in y}\mathrm P(Y_{n-1}=y_{n-1},\, n\not\in Y) + \rmP(X=x).
\end{equation}
\end{proposition}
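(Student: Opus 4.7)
The plan is to decompose the event $\{X\le x\}$ as the disjoint union of $\{X<x\}$ and $\{X=x\}$, so that $F(x)=\rmP(X<x)+\rmP(X=x)$. Since the second term already appears on the right-hand side of \eqref{e:F-id1}, the task reduces to identifying $\rmP(X<x)$ with the stated double sum.

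The heart of the argument is a lexicographic characterization: when $x=(0.x_1x_2\ldots)_q$ is written in its non-terminating expansion, the event $\{X<x\}$ coincides with the disjoint union, over $n\ge 1$ with $x_n\ge 1$ and $0\le k\le x_n-1$, of the cylinder events $\{X_1=x_1,\ldots,X_{n-1}=x_{n-1},\,X_n=k\}$. For the forward direction I would let $n$ be the smallest index with $X_n\ne x_n$, which exists since $X\ne x$. If $X_n<x_n$ we are done; otherwise $X_n\ge x_n+1$, and the tail bound $\sum_{i>n}(X_i-x_i)q^{-i}\ge -\sum_{i>n}(q-1)q^{-i}=-q^{-n}$ yields
\[ X-x\ge (X_n-x_n-1)q^{-n}\ge 0,\]
contradicting $X<x$ (the boundary case $X_n=x_n+1$, $X_i=0$, $x_i=q-1$ for $i>n$ only gives $X=x$, not $X<x$). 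The converse direction is the analogous estimate showing that the lexicographic condition forces $X\le x$, with equality only in the extremal configuration $X_n=x_n-1$, $X_i=q-1$ for $i>n$; this equality would require $x_i=0$ for all $i>n$, which is incompatible with our choice of the non-terminating expansion (whose digits are eventually $q-1$ when $x\in\mathbb{Q}_q$, and never terminate in $0$'s otherwise, since the hypothesis $x_n\ge 1$ precludes $x=0$).

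Summing the probabilities of the pairwise disjoint cylinder events, each equal to $p(x_1,\ldots,x_{n-1},k)$, then yields \eqref{e:F-id1}. For the binary case \eqref{e:F-id} I would specialize $q=2$: the condition $x_n\ge 1$ becomes $x_n=1$, i.e.\ $n\in y$; the inner sum collapses to the single term $k=0$; and $p(x_1,\ldots,x_{n-1},0)=\rmP(Y_{n-1}=y_{n-1},\,n\notin Y)$ through the correspondence $X_i=1_Y(i)$. The main obstacle is the bookkeeping around the non-uniqueness of base-$q$ expansions: it is precisely the non-terminating convention on $x$ that makes the lexicographic order on digit sequences coincide with the real ordering in the strict inequalities, and so makes the double sum correct; with that subtlety pinned down, the remainder is just the accounting of probabilities over a partition.
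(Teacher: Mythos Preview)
Your proposal is correct and follows essentially the same approach as the paper: decompose $F(x)=\rmP(X<x)+\rmP(X=x)$, then identify $\{X<x\}$ as the disjoint union of the cylinder events $\{X_1=x_1,\ldots,X_{n-1}=x_{n-1},X_n=k\}$ via the lexicographic comparison of digit sequences, and sum. The only difference is that you spell out both directions of the lexicographic characterization and the role of the non-terminating convention in excluding the boundary configurations, whereas the paper simply asserts the equivalence in one line; your extra care is justified but does not change the argument.
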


\begin{remark}\label{rem:useful}
	In contrast to other results in this paper, we do not assume stationarity in Proposition~\ref{t:useful}. If $x\in\mathbb Q_q$ and we consider its terminating expansion, then \eqref{e:F-id1}--\eqref{e:F-id} also hold provided we replace $\rmP(X=x)$ by $\rmP(X_1=x_1,\dots X_n=x_n,X_{n+1}=X_{n+2}=\dots=0)$. This follows by similar arguments as in Appendix~A.1. 
\end{remark}

\begin{proposition}\label{prop:useful:cont}
	Suppose that $\{X_n\}_{n\geq 1}$ is stationary and $x=(0.x_1x_2\ldots)_q\in [0,1]$.
	Then $F$ is continuous at $x$ if and only if $\lim_{n\rightarrow\infty}p(x_1,\ldots,x_n)=0$.
\end{proposition}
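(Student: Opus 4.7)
The plan is to reduce the equivalence to the standard fact that a CDF is continuous at a point $x$ if and only if the corresponding atom has zero mass, and then identify that atom with a decreasing limit of cylinder probabilities.

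First I would note that since $F$ is a CDF (hence right-continuous and nondecreasing), continuity at $x$ is equivalent to $\rmP(X=x)=0$. The goal is therefore reduced to showing
\begin{equation*}
\rmP(X=x)=\lim_{n\to\infty}p(x_1,\ldots,x_n),
\end{equation*}
where the expansion used is the non-terminating one when $x\in\mathbb Q_q$, consistent with the convention in Proposition~\ref{t:useful}.

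Next I would introduce the decreasing sequence of events $A_n\coloneqq\{X_1=x_1,\ldots,X_n=x_n\}$, whose probabilities are exactly $p(x_1,\ldots,x_n)$. By downward continuity of probability,
\begin{equation*}
\lim_{n\to\infty}p(x_1,\ldots,x_n)=\rmP\!\left(\textstyle\bigcap_{n\ge 1}A_n\right)=\rmP(X_i=x_i\text{ for all }i\ge 1).
\end{equation*}
The remaining task is to identify this last probability with $\rmP(X=x)$. If $x\notin\mathbb Q_q$, then the base-$q$ expansion of $x$ is unique, so $\{X_i=x_i\text{ for all }i\}$ is exactly the event $\{X=x\}$ and we are done. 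If $x\in\mathbb Q_q$, then $\{X=x\}$ decomposes as the disjoint union of the event that the digits of $X$ follow the chosen non-terminating expansion and the event that they follow the terminating one. Here I would invoke Remark~2.2 (referred to in Section~\ref{s:setting}), which states that under stationarity $\rmP(X\in\mathbb Q_q)=0$; in particular the terminating-expansion event has probability zero, so again $\rmP(\bigcap_n A_n)=\rmP(X=x)$, which equals $0$, consistent with both sides of the claimed equivalence.

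The only genuinely nontrivial ingredient is the statement that under stationarity $X$ is almost surely not a base-$q$ fraction; once that is available (as asserted in Remark~2.2), the remainder is just continuity of measure along a decreasing sequence together with the standard fact on jumps of a CDF. I would present the two cases $x\notin\mathbb Q_q$ and $x\in\mathbb Q_q$ separately to make the role of stationarity transparent, and conclude that $F$ is continuous at $x$ if and only if $\lim_{n\to\infty}p(x_1,\ldots,x_n)=0$.
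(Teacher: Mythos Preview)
Your proposal is correct and follows essentially the same approach as the paper: both reduce the question to $\rmP(X=x)=\lim_{n\to\infty}p(x_1,\ldots,x_n)$ via continuity of measure along the decreasing cylinder events, and both invoke stationarity (via Remark~\ref{rem:fractions}) to dispose of the base-$q$ fraction case. Your treatment of $x\in\mathbb Q_q$ is in fact slightly more explicit than the paper's, which simply notes that $F$ is continuous there and leaves the vanishing of the limit implicit.
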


\begin{proposition}\label{prop:useful:derivative}
	Suppose that $\{X_n\}_{n\geq 1}$ is stationary. Then, for Lebesgue almost all $x\in[0,1]$, $F'(x)$ exists and equals a constant $c\in[0,1]$. Additionally, assume  
that $F$ is differentiable at $x=(0.x_1x_2\ldots)_q\in[0,1]\setminus\mathbb Q_q$. Then, for any $m\in \mathbb{N}\cup \{0\}$ and $\xi_1,\dots,\xi_m\in \{0,\dots,q-1\}$,
	\begin{equation}\label{e:F'}
		F'(x)=\lim_{n\rightarrow\infty}q^{n+m}p(x_1,\ldots,x_n,\xi_1,\dots,\xi_m),
	\end{equation}
	where for $m=0$ we replace $q^{n+m}p(x_1,\ldots,x_n,\xi_1,\dots,\xi_m)$ by $q^np(x_1,\dots,x_n)$. 
\end{proposition}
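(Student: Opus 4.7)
For the almost-everywhere statement I would simply invoke the decomposition of $F$ already available from Theorem~3 of~\cite{part1}: stationarity gives $F=\alpha_1 F_1+\alpha_2 F_2+\alpha_3 F_3$ with weights $\alpha_i\ge0$ summing to one, where $F_1(x)=x$ on $[0,1]$, $F_2$ is discrete, and $F_3$ is singular continuous. Since $F$ is monotone, $F'$ exists Lebesgue-almost everywhere, and at such points $F_2'=F_3'=0$ while $F_1'=1$ on $(0,1)$. Hence $F'(x)=\alpha_1$ for a.e.\ $x\in[0,1]$, and the constant $c=\alpha_1\in[0,1]$.

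For the formula \eqref{e:F'} my strategy is to rewrite $q^{n+m}p(x_1,\ldots,x_n,\xi_1,\ldots,\xi_m)$ as a finite difference quotient of $F$ and then pass to the limit using the differentiability at $x$. Set
\[
a_n\coloneqq(0.x_1\ldots x_n\xi_1\ldots\xi_m00\ldots)_q,\qquad b_n\coloneqq a_n+q^{-(n+m)},
\]
both of which are base-$q$ fractions distinct from $x$ (since $x\notin\mathbb Q_q$). Up to the event $\{X\in\{a_n,b_n\}\}$, which has probability zero by Remark~2.2, the event $\{a_n<X\le b_n\}$ coincides with the cylinder $\{X_1=x_1,\ldots,X_n=x_n,X_{n+1}=\xi_1,\ldots,X_{n+m}=\xi_m\}$, so
\[
F(b_n)-F(a_n)=p(x_1,\ldots,x_n,\xi_1,\ldots,\xi_m),
\]
and the right-hand side of \eqref{e:F'} becomes the finite difference $[F(b_n)-F(a_n)]/(b_n-a_n)$.

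The key analytic step is then to show that this difference quotient tends to $F'(x)$ as $n\to\infty$. A geometric-series bound, using that $a_n$ and $x$ share their first $n$ base-$q$ digits, gives $|a_n-x|,|b_n-x|\le 2q^{-n}$. Since $F$ is differentiable at $x$ and $a_n,b_n\ne x$, I would write
\[
F(a_n)-F(x)=F'(x)(a_n-x)+o(|a_n-x|),\qquad F(b_n)-F(x)=F'(x)(b_n-x)+o(|b_n-x|),
\]
subtract, and multiply by $q^{n+m}$ to obtain $q^{n+m}[F(b_n)-F(a_n)]=F'(x)+q^m\cdot o(1)$, which tends to $F'(x)$ as $n\to\infty$ with $m$ fixed. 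The $m=0$ case is identical with the empty suffix.

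The main obstacle I foresee is this last error estimate: differentiability at the single point $x$ only gives control of the difference quotient from $x$ in each direction and supplies no uniform modulus of continuity on an interval. One therefore has to route $F(b_n)-F(a_n)$ through $F(x)$ and verify that two error terms, each of order $o(q^{-n})$, remain small compared to $q^{-(n+m)}$ after multiplication by $q^{n+m}$ with $m$ held fixed. The hypothesis $x\notin\mathbb Q_q$ is used in two ways here: it ensures $a_n\ne x\ne b_n$, so that the difference-quotient form of differentiability applies, and it is compatible with the decay $|a_n-x|,|b_n-x|=O(q^{-n})$, which is exactly the rate needed to defeat the growth $q^{n+m}$.
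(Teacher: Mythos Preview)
Your approach is correct and essentially the same as the paper's. The paper isolates your ``main obstacle'' as a standalone lemma: if $f$ is differentiable at $x$ and $a_n,b_n\to x$ with $|a_n-b_n|\ge c\max\{|a_n-x|,|b_n-x|\}$ for some $c>0$, then $(f(a_n)-f(b_n))/(a_n-b_n)\to f'(x)$; its proof is exactly your routing-through-$x$ triangle-inequality argument, and the hypothesis is verified with $c=q^{-m}$, which is your observation that $|a_n-x|,|b_n-x|=O(q^{-n})$ while $b_n-a_n=q^{-(n+m)}$.
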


\begin{remark}\label{rem:fractions}
For Propositions~\ref{prop:useful:cont}--\ref{prop:useful:derivative} it should be noticed that $\mathbb Q_q$ is countable and stationarity implies that $F$ is continuous at any base-$q$ fraction (Theorem~1 in \cite{part1}) and so $X\not\in\mathbb Q_q$ almost surely. In statistical terms, 
\eqref{e:F'} with $m=0$ shows that $c$ may be interpreted as the limit of the likelihood ratio given by the probability of observing $x_1,\ldots,x_n$ under the stochastic model of $\{X_n\}_{n\ge1}$ 
versus under the IID case of $\{X_n\}_{n\ge1}$ with a uniform distribution on $\{0,\ldots,q-1\}$.
\end{remark}

\begin{corollary}\label{cor:useful:derivatives} 
Suppose that $\{X_n\}_{n\geq 1}$ is stationary and $F$ is differentiable at $x=(0.x_1x_2\ldots)_q\in(0,1)\setminus\mathbb Q_q$ such that $F'(x)>0$.
Then, for any $m\in \mathbb{N}\cup \{0\}$ and $\xi_1,\dots,\xi_m\in \{0,\dots,q-1\}$, 
\begin{equation}\label{e:F'2}
\lim_{n\to\infty} \frac{p(x_1,\dots,x_n,\xi_1,\dots,\xi_m)}{p(x_1,\dots,x_n,\xi_1,\dots,\xi_{m-1})}=q^{-1},
\end{equation}
where in the case $m=1$ we replace $p(x_1,\dots,x_n,\xi_1,\dots,\xi_{m-1})$ with $p(x_1,\dots,x_n)$ and in the case $m=0$ we replace $\frac{p(x_1,\dots,x_n,\xi_1,\dots,\xi_m)}{p(x_1,\dots,x_n,\xi_1,\dots,\xi_{m-1})}$ with $\frac{p(x_1,\dots,x_n)}{p(x_1,\dots,x_{n-1})}$.
\end{corollary}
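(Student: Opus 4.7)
The plan is to derive this directly from Proposition~\ref{prop:useful:derivative} by writing the ratio as a quotient of two sequences that both converge to the same positive limit $F'(x)$, so the ratio converges to $q^{-1}$ after pulling out a factor of $q^{-1}$.

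More precisely, first I would handle the generic case $m\ge 1$. By Proposition~\ref{prop:useful:derivative} applied at the given point $x$, for any choice of trailing digits we have
\[
q^{n+m}\,p(x_1,\dots,x_n,\xi_1,\dots,\xi_m)\longrightarrow F'(x),\qquad n\to\infty,
\]
and likewise, applying the same proposition with the tuple $(\xi_1,\dots,\xi_{m-1})$ of length $m-1$ in place of $(\xi_1,\dots,\xi_m)$,
\[
q^{n+m-1}\,p(x_1,\dots,x_n,\xi_1,\dots,\xi_{m-1})\longrightarrow F'(x).
\]
Since $F'(x)>0$, both sequences are eventually bounded away from $0$, so I may divide them; the ratio equals
\[
\frac{q^{n+m}\,p(x_1,\dots,x_n,\xi_1,\dots,\xi_m)}{q^{n+m-1}\,p(x_1,\dots,x_n,\xi_1,\dots,\xi_{m-1})}
=q\cdot\frac{p(x_1,\dots,x_n,\xi_1,\dots,\xi_m)}{p(x_1,\dots,x_n,\xi_1,\dots,\xi_{m-1})},
\]
and its limit is $F'(x)/F'(x)=1$, which yields \eqref{e:F'2}.

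Next I would cover the two edge cases. For $m=1$, the denominator in the statement is $p(x_1,\dots,x_n)$; this is exactly the case of Proposition~\ref{prop:useful:derivative} with $m=0$, so the same argument with the two convergent sequences $q^{n+1}p(x_1,\dots,x_n,\xi_1)\to F'(x)$ and $q^np(x_1,\dots,x_n)\to F'(x)$ gives the result. For $m=0$, I would apply Proposition~\ref{prop:useful:derivative} once with index $n$ and $m=0$ to get $q^np(x_1,\dots,x_n)\to F'(x)$, and once with index $n-1$ and $m=0$ to get $q^{n-1}p(x_1,\dots,x_{n-1})\to F'(x)$; dividing again produces $q^{-1}$ in the limit.

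The proof is essentially a one-line consequence of Proposition~\ref{prop:useful:derivative} once one notes that the proposition allows arbitrary freedom in the tail $(\xi_1,\dots,\xi_m)$ while keeping the limit equal to the same value $F'(x)$. There is no real obstacle; the only point requiring a moment's care is justifying the division, which is precisely what the hypothesis $F'(x)>0$ is there for, and making sure the boundary cases $m=0$ and $m=1$ are phrased so that both the numerator's and denominator's normalising powers of $q$ match the versions of Proposition~\ref{prop:useful:derivative} being invoked.
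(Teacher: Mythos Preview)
Your proposal is correct and follows essentially the same approach as the paper: apply Proposition~\ref{prop:useful:derivative} with tail lengths $m$ and $m-1$ to obtain two sequences both converging to $F'(x)>0$, then divide and cancel the extra factor of $q$. The paper treats only the case $m>1$ explicitly and defers $m=0,1$ to ``similar arguments'', whereas you spell these out, but the idea is identical.
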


\begin{remark}
Corollary~\ref{cor:useful:derivatives} becomes useful in Sections~\ref{s:MC-general} and \ref{s:renewal-general}
when considering Markov chains and renewal processes, since the left hand side in \eqref{e:F'2} then depends only on $\xi_1,\dots,\xi_m$. 
In statistical terms and considering $x_1,\ldots,x_n$ as data, \eqref{e:F'2} shows that for fixed $m\ge1$ and $n$ tending to infinity the predictive distribution of $X_{n+1},\ldots,X_{n+m}$ corresponds asymptotically to $m$ independent uniformly distributed random variables on $\{0,\ldots,q-1\}$.
\end{remark} 

The next proposition relates monotonicity properties of $F$ to the finite dimensional probabilities, where for any $x\in[0,1]$, we say that {\em $F$ is strictly increasing at $x$} if $F(s)<F(t)$ whenever $s<x<t$. 

\begin{proposition}\label{prop:useful:increasing}
	Suppose that $\{X_n\}_{n\geq1}$ is stationary and $x\in[0,1]$.
	Then $F$ is 
strictly increasing at $x$ if and only if for all $n\in \mathbb{N}$,
	\begin{itemize}
	\item if $x=(0.x_1x_2\ldots)_q\not\in\mathbb Q_q$, then
		\[ 
		p(x_1,\ldots,x_n) 
		>0,
		\] 
		\item   if $x=(0.x_1\dots x_m 0 0 \dots)_q\in\mathbb Q_q$ is of order $m$, then
		\[ 
		p(x_1,\ldots,x_m,\underbrace{0,\dots,0}_{n \textup{ zeros}})+p(x_1,\ldots,x_{m-1},x_m-1,\underbrace{q-1,\dots,q-1}_{n \textup{ } (q-1)\textup{'s}})>0.
		\] 
	\end{itemize}
In particular, $F$ is  
strictly increasing on $[0,1]$ if $p(x_1,\ldots,x_n)>0$ for all $n\in\mathbb N$ and all $x_1,\ldots,x_n\in\{0,\ldots,q-1\}$.
\end{proposition}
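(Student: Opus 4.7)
The plan is to translate strict increase of $F$ at $x$ into positivity of $F(t_n)-F(s_n)$ for sequences of $q$-adic endpoints $s_n<x<t_n$ (or straddling $x$ in Case~2), and to compute these differences via cylinder probabilities. The essential tool, used throughout, is that under stationarity $F$ is continuous at every base-$q$ fraction in $(0,1)$ by Theorem~1 in \cite{part1}, so boundary point masses vanish.

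I would first establish a preliminary identity: for $y_1,\dots,y_k\in\{0,\dots,q-1\}$, with $a=(0.y_1\dots y_k)_q$ and $b=a+q^{-k}$ both in $(0,1)$, one has $F(b)-F(a)=p(y_1,\dots,y_k)$. This comes from the inclusions $\{X\in(a,b)\}\subseteq\{X_1=y_1,\dots,X_k=y_k\}\subseteq\{X\in[a,b]\}$; the second is immediate from $X=\sum X_nq^{-n}$, and the first follows by a short induction on $k$ since $X\in(a,b)$ uniquely determines its first $k$ digits. Stationarity gives $P(X=a)=P(X=b)=0$, so all three events carry the same probability, namely $p(y_1,\dots,y_k)=F(b)-F(a)$.

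For Case~1 ($x\notin\mathbb Q_q$), set $s_n=(0.x_1\dots x_n)_q$ and $t_n=s_n+q^{-n}$. Since $x\notin\mathbb Q_q$ we have $s_n<x<t_n$ strictly, and the preliminary identity yields $F(t_n)-F(s_n)=p(x_1,\dots,x_n)$. If every $p(x_1,\dots,x_n)>0$, then for any $s<x<t$ I pick $n$ large with $[s_n,t_n]\subset(s,t)$ and conclude $F(t)-F(s)>0$; conversely, if $p(x_1,\dots,x_n)=0$ for some $n$, then $s_n<x<t_n$ with $F(t_n)=F(s_n)$ directly witnesses failure of strict increase. The edge cases $x\in\{0,1\}$ require a minor separate check involving $P(X=0)$ or $P(X=1)$, but the conclusion is the same.

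For Case~2 ($x\in\mathbb Q_q$ of order $m$), I apply the preliminary identity to the two order-$(m+n)$ cylinder intervals $[x-q^{-(m+n)},x]$ and $[x,x+q^{-(m+n)}]$, whose cylinder digits are $(x_1,\dots,x_{m-1},x_m-1,q-1,\dots,q-1)$ and $(x_1,\dots,x_m,0,\dots,0)$ respectively. Their probabilities sum to the expression in the statement, which equals $F(x+q^{-(m+n)})-F(x-q^{-(m+n)})$ after using $P(X=x)=0$ (continuity of $F$ at $x\in\mathbb Q_q$). The argument from Case~1 then applies verbatim. The final ``in particular'' claim is immediate, since under blanket positivity of the cylinder probabilities both bullet conditions hold at every $x\in[0,1]$. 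The main technical obstacle throughout is the control of endpoint point masses $P(X=s_n),P(X=t_n),P(X=x)$, handled uniformly by continuity of $F$ at base-$q$ fractions under stationarity.
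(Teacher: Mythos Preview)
Your proposal is correct and follows essentially the same route as the paper's proof. The paper (Appendix~A5) only sketches the case $x\in(0,1)\setminus\mathbb Q_q$, defining $y_n=(0.x_1\dots x_n)_q$, $z_n=y_n+q^{-n}$ and using continuity at base-$q$ fractions to get $F(z_n)-F(y_n)=p(x_1,\dots,x_n)$; you do exactly this (with $s_n,t_n$ in place of $y_n,z_n$) and additionally spell out the base-$q$ fraction case and the preliminary cylinder identity, which the paper leaves implicit.
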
 
In Proposition~\ref{prop:useful:increasing}, we can of course in all places replace `for all $n\in\mathbb N$' by `for all integers $n>n_0$, where $n_0>0$ is arbitrary'. 

\section{Markov chains}\label{s:MC-general}

For $m\in\mathbb N$, we say that $\{X_n\}_{n\geq 1}$ is a Markov chain of order $m-1$ if the $X_n$'s are independent when $m=1$, and $X_{n}$ conditioned on $X_1,\ldots,X_{n-1}$ depends only on $X_{n-m+1},\ldots,X_{n-1}$ whenever $n\geq m>1$. 
Assuming also stationarity, the finite dimensional distributions of $\{X_n\}_{n\ge1}$ are determined by initial probabilities  
$$\pi(x_1,\ldots,x_{m-1})=\mathrm P(X_1=x_1,...,X_{m-1}=x_{m-1})$$ if $m>1$ (and nothing if $m=1$) and transition probabilities 
$$\pi_{x_1,\ldots,x_{m}}=\mathrm P(X_m=x_m\,|\,X_1=x_1,...,X_{m-1}=x_{m-1})$$
(with $\mathrm P(X_m=x_m\,|\,X_1=x_1,...,X_{m-1}=x_{m-1})=\mathrm P(X_1=x_1)$ if $m=1$)  
satisfying the following obvious conditions (using the same notation $\pi$ for initial and transition probabilities becomes convenient in Section~\ref{s:mixture-general}, but notice that we use subscripts when considering transition probabilities). For all $x_1,\ldots,x_{m}\in\{0,\ldots,q-1\}$, we require  
\begin{itemize}
\item for $m=1$: $\pi_{x_1}\ge0$ and 
$\sum_{j=0}^{q-1}\pi_{j}=1$;
\item for $m>1$ and $\pi(x_1,\ldots,x_{m-1})\ge0$: $\sum_{j_1=0}^{q-1}\cdots\sum_{j_m=0}^{q-1}\pi(j_1,\ldots,j_{m-1})=1$, $\pi_{x_1,\ldots,x_{m}}\ge0$, and $\sum_{j=0}^{q-1}\pi_{x_1,\ldots,x_{m-1},j}=1$;
\item for $m>1$: the initial distribution should be invariant, that is,
\begin{equation}\label{e:equil}
\sum_{j=0}^{q-1}\pi(j,x_2,\ldots,x_{m-1})\pi_{j,x_2,\ldots,x_m}=\pi(x_2,\ldots,x_{m}),
\end{equation}
where $\pi(j,x_2,\ldots,x_{m-1})$ is replaced by $\pi(j)$ if $m=2$. 
\end{itemize}
 The conditions ensure that
the finite dimensional probabilities $p(x_1,\ldots,x_n)$ can be defined for $n\le m-1$ from $\pi(x_1,\ldots,x_{m-1})$, using its marginal distributions if $n<m-1$. Furthermore, for $n\ge m$, 
in accordance to the Markov property, we have
\begin{equation}\label{e:Markovfinitedimdist}
p(x_1,\ldots,x_n)=\pi(x_1,\ldots,x_{m-1})\prod_{j=m}^n \pi_{x_{j-m+1},\ldots,x_j},
\end{equation}
where $\pi(x_1,\ldots,x_{m-1})\coloneqq1$ if $m=1$. Finally, \eqref{e:equil} is needed because we assume stationarity.  

\begin{theorem}
\label{prop:markov_singular}
	Suppose that $\{X_n\}_{n\ge1}$ is a stationary Markov chain of an arbitrary order $m-1$. Then either $F$ is the uniform CDF on $[0,1]$ or
	$F$ is singular. Moreover, $F$ is continuous if $\pi_{x_1,\ldots,x_m}<1$ for all $x_1,\ldots,x_m\in\{0,\ldots,q-1\}$.
\end{theorem}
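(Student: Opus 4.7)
The plan is to combine Propositions~\ref{prop:useful:cont}--\ref{prop:useful:derivative} and Corollary~\ref{cor:useful:derivatives} with the Markov product formula \eqref{e:Markovfinitedimdist}. By Proposition~\ref{prop:useful:derivative}, $F'=c$ Lebesgue a.e.\ on $[0,1]$ for some $c\in[0,1]$. If $c=0$, then $F$ is singular by definition, so the law of pure types reduces to showing that $c>0$ forces $F(x)=x$.

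Assume $c>0$, and let $A\subseteq(0,1)\setminus\mathbb Q_q$ be the full-Lebesgue-measure set on which $F$ is differentiable with $F'(x)=c$. By Borel's theorem on normal numbers, the set $N$ of $x=(0.x_1x_2\ldots)_q\in[0,1]$ for which every word $(y_1,\ldots,y_{m-1})\in\{0,\ldots,q-1\}^{m-1}$ occurs as a block $(x_{n-m+2},\ldots,x_n)$ for infinitely many $n$ also has full Lebesgue measure, so I may fix some $x\in A\cap N$. Applying Corollary~\ref{cor:useful:derivatives} to this $x$ with one trailing digit $\xi\in\{0,\ldots,q-1\}$, and using \eqref{e:Markovfinitedimdist} to evaluate the ratio, yields
\[
\pi_{x_{n-m+2},\ldots,x_n,\xi}=\frac{p(x_1,\ldots,x_n,\xi)}{p(x_1,\ldots,x_n)}\longrightarrow\frac{1}{q}\qquad(n\to\infty).
\]
Because the left-hand side is the constant $\pi_{y_1,\ldots,y_{m-1},\xi}$ along the subsequence of $n$ for which $(x_{n-m+2},\ldots,x_n)=(y_1,\ldots,y_{m-1})$, and by $(m-1)$-normality each such subsequence is infinite, I obtain $\pi_{y_1,\ldots,y_{m-1},\xi}=1/q$ for every $(y_1,\ldots,y_{m-1})$ and every $\xi$. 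Hence the $X_n$'s are IID uniform on $\{0,\ldots,q-1\}$, and so $F(x)=x$. For the continuity claim, set $\delta\coloneqq\max_{y_1,\ldots,y_m}\pi_{y_1,\ldots,y_m}<1$ (a max over a finite set); then \eqref{e:Markovfinitedimdist} yields $p(x_1,\ldots,x_n)\le\delta^{n-m+1}\to 0$ for every $x=(0.x_1x_2\ldots)_q\in[0,1]\setminus\mathbb Q_q$, so Proposition~\ref{prop:useful:cont} together with Remark~\ref{rem:fractions} gives continuity of $F$ on all of $\mathbb R$.

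The main delicacy is the normal-number step: I must verify that for the chosen $x\in A\cap N$ the denominators $p(x_1,\ldots,x_n)$ are eventually positive, so that Corollary~\ref{cor:useful:derivatives} and the displayed ratio are genuinely meaningful, and also that the transition probabilities $\pi_{y_1,\ldots,y_{m-1},\xi}$ are well defined for \emph{every} tuple $(y_1,\ldots,y_{m-1})$ rather than only those of positive stationary probability. Both issues are handled automatically by $c>0$: the relation $q^np(x_1,\ldots,x_n)\to c>0$ forces $p(x_1,\ldots,x_n)>0$ for large $n$, and combined with $x\in N$ this forces $\pi(y_1,\ldots,y_{m-1})\ge p(x_1,\ldots,x_n)>0$ for every tuple, so the displayed identity indeed determines the transition from every state.
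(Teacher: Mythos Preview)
Your argument is correct, and the continuity part matches the paper's verbatim, but your pure-type argument takes a longer route than necessary. The paper applies Corollary~\ref{cor:useful:derivatives} with $m$ trailing digits $\xi_1,\ldots,\xi_m$ rather than a single digit $\xi$. The point is that for a chain of order $m-1$ the ratio
\[
\frac{p(x_1,\ldots,x_n,\xi_1,\ldots,\xi_m)}{p(x_1,\ldots,x_n,\xi_1,\ldots,\xi_{m-1})}=\pi_{\xi_1,\ldots,\xi_m}
\]
is already independent of $x_1,\ldots,x_n$, so the corollary immediately gives $\pi_{\xi_1,\ldots,\xi_m}=q^{-1}$ for \emph{every} tuple (arguing by contradiction from $F\neq F_1$), with no need to pass through normal numbers or worry about which $(m-1)$-blocks occur along $x$. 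Your approach with one trailing digit leaves the ratio equal to $\pi_{x_{n-m+2},\ldots,x_n,\xi}$, which depends on the tail of $x$, and you then have to invoke Borel's normal-number theorem to hit every state; this works, and your paragraph handling the positivity of $p(x_1,\ldots,x_n)$ and of the initial probabilities is careful and correct, but it is extra machinery that the paper's choice of $m$ trailing digits sidesteps entirely.
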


\begin{remark} 
Some comments to Theorem~\ref{prop:markov_singular} are in order. The theorem establishes a law of pure type, cf.\ the discussion at the beginning of Section~\ref{s:order 0}. If $\{X_n\}_{n\geq 1}$ is a stationary Markov chain $\{X_n\}_{n\geq 1}$ and $F$ is singular,
a natural question is if $F$ can be a mixture of $F_2$ and $F_3$ (in the notation of Section~\ref{s:setting}). This is indeed the case. For instance, take $q=3$, $m=2$,  $\pi(0)=\pi(1)=\pi(2)=1/3$, and 
	\[\begin{bmatrix*}[c]
		\pi_{0,0} & \pi_{0,1}& \pi_{0,2}\\
		\pi_{1,0} & \pi_{1,1}& \pi_{1,2}\\
		\pi_{2,0} & \pi_{2,1}& \pi_{2,2}
	\end{bmatrix*}=\begin{bmatrix*}[r]
		1/2 &0& 1/2 \\0& 1 & 0 \\ 1/2 & 0 & 1/2
	\end{bmatrix*}.\]
	Then $F=\tfrac13 F_2 + \tfrac23 F_3$ where $F_2(x)=H(x-\tfrac12)$ and $F_3$ is the Cantor function. Furthermore, a natural question is if
	the last condition in Theorem~\ref{prop:markov_singular} is a necessary condition. Indeed this is not the case. For example, let
	$q=m=2$, $\pi(0)=\tfrac23$, $\pi(1)=\tfrac13$, and  
		\[\begin{bmatrix*}[c]
			\pi_{0,0} & \pi_{0,1}\\
			\pi_{1,0} & \pi_{1,1}
		\end{bmatrix*}=\begin{bmatrix*}[r]
			1/2 & 1/2 \\ 1 & 0
		\end{bmatrix*}.\]
	Then $F$ is continuous, cf.\ Proposition~\ref{prop:useful:cont}. 
\end{remark}

The next proposition shows that any singular continuous CDF which satisfies the stationarity condition \eqref{e:3} can be approximated in the uniform norm by the CDF for the random variable on $[0,1]$ with digits given by a stationary Markov chain of sufficiently high order and defined in a natural way. 

\begin{proposition}\label{prop-horiac}
Assume $\{X_n\}_{n\ge1}$ is stationary and $F$ is singular continuous.
For $m=1,2,\ldots$, let $\{X_n^{(m)}\}_{n\ge1}$ be the stationary Markov chain of order $m-1$ which is obtained 
when $(X_1,\ldots,X_m)$ and $(X_1^{(m)},\ldots,X_m^{(m)})$ are identically distributed and a construction of the finite dimensional probabilities as in \eqref{e:Markovfinitedimdist} is used.  
Let $F^{(m)}$ be the CDF of $(0.X_1^{(m)}X_2^{(m)}\ldots)_q$. 
Then 
\begin{equation}\label{e:F3approx}
\lim_{m\rightarrow\infty}\sup_{x\in [0,1]}|F(x)-F^{(m)}(x)|= 0.
\end{equation}
\end{proposition}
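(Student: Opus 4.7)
The plan is to couple $\{X_n\}_{n\ge1}$ with the Markov approximation $\{X_n^{(m)}\}_{n\ge1}$ on a common probability space so that their first $m$ digits literally coincide. The associated base-$q$ expansions then differ by at most $q^{-m}$ pathwise, and P\'olya's uniform-convergence theorem promotes this into the required uniform estimate on the CDFs.

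The construction I have in mind places $\{X_n\}_{n\ge1}$ together with an independent sequence $U_{m+1},U_{m+2},\ldots$ of i.i.d.\ uniform $[0,1]$ random variables on one probability space. Set $X_n^{(m)}\coloneqq X_n$ for $n=1,\ldots,m$, which already reproduces the prescribed joint distribution of the first $m$ coordinates. For $n>m$ I would generate $X_n^{(m)}$ from $U_n$ by the standard inverse-CDF recipe applied to the Markov transition $\pi_{X_{n-m+1}^{(m)},\ldots,X_{n-1}^{(m)},\,\cdot}$, where the $\pi$'s are the initial and transition probabilities defined at the start of Section~\ref{s:MC-general} and determined by the matching-of-marginals hypothesis. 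A routine induction on $n$ based on \eqref{e:Markovfinitedimdist} then verifies that $\{X_n^{(m)}\}_{n\ge1}$ has precisely the law of the stationary Markov chain of order $m-1$ described in the statement; this bookkeeping step is the only calculation really involved.

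Writing $X^{(m)}\coloneqq(0.X_1^{(m)}X_2^{(m)}\cdots)_q$ and using $X_n=X_n^{(m)}$ for $n\le m$, I would obtain the pathwise bound
\[
|X-X^{(m)}|=\Big|\sum_{n>m}(X_n-X_n^{(m)})\,q^{-n}\Big|\le (q-1)\sum_{n>m}q^{-n}=q^{-m}.
\]
Hence $X^{(m)}\to X$ uniformly in $\omega$, so in particular in distribution, which gives $F^{(m)}(x)\to F(x)$ at every continuity point of $F$. Since $F$ is singular continuous, every $x\in\R$ is such a point, so P\'olya's theorem (pointwise convergence of CDFs to a continuous limit is automatically uniform) delivers \eqref{e:F3approx}. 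I do not anticipate any substantive obstacle: once the coupling is set up, the whole estimate reduces to the trivial observation that only digits with index $n\ge m+1$ can differ between the two processes, so the probabilistic content of the proof is entirely contained in the construction of the coupling.
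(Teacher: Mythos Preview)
Your proof is correct. Both your argument and the paper's share the same skeleton: the first $m$ marginals of $\{X_n\}$ and $\{X_n^{(m)}\}$ agree, so the two CDFs can differ only by an amount controlled by $q^{-m}$, and continuity of $F$ together with P\'olya's theorem then upgrades pointwise to uniform convergence. The execution differs. The paper does not build a coupling; instead it invokes Proposition~\ref{t:useful} (the formula \eqref{e:F-id1} for $F$ in terms of finite-dimensional probabilities) to conclude directly that $F^{(m)}((0.x_1\ldots x_m)_q)=F((0.x_1\ldots x_m)_q)$ for every base-$q$ fraction of order at most $m$, and then squeezes $F^{(m)}(x)$ between $F((0.x_1\ldots x_m)_q)$ and $F((0.x_1\ldots x_m)_q+q^{-m})$. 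Your coupling yields the pathwise bound $|X-X^{(m)}|\le q^{-m}$, which gives the same sandwich $F(x-q^{-m})\le F^{(m)}(x)\le F(x+q^{-m})$ without appealing to \eqref{e:F-id1}. Your route is a bit more self-contained (no need for the earlier representation result), while the paper's is a touch shorter because it reuses machinery already in place.
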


Combining Proposition~\ref{prop-horiac} with the characterization results of stationarity from \cite{part1} (see Section~\ref{s:setting}) gives immediately the following result. 

\begin{corollary}
If $F$ satisfies \eqref{e:3}, then it can be approximated in the uniform norm by a countable mixture of CDF's coming from stationary Markov chains (possibly of different orders). 
\end{corollary}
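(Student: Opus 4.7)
The plan is to combine the structural decomposition of a stationary CDF recalled in Section~\ref{s:setting} (Theorem~3 of \cite{part1}) with the Markov approximation of the singular continuous part given by Proposition~\ref{prop-horiac}. Since $F$ satisfies \eqref{e:3}, Theorem~1 of \cite{part1} guarantees that $F$ is the CDF of some stationary digit process, and Theorem~3 of \cite{part1} then yields a decomposition $F=\alpha_1F_1+\alpha_2F_2+\alpha_3F_3$ with nonnegative weights summing to $1$, where $F_1$ is the uniform CDF on $[0,1]$, $F_2$ is an at most countable convex combination of CDFs of the form $F_{s_1,\ldots,s_k}$ appearing in \eqref{e:F2type}, and $F_3$ is a singular continuous CDF which itself satisfies \eqref{e:3}.

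Next I observe that $\alpha_1 F_1+\alpha_2 F_2$ is already an at most countable convex combination of CDFs arising from stationary Markov chains: $F_1$ is obtained from IID uniform digits on $\{0,\ldots,q-1\}$, i.e.\ a $0$-th order Markov chain, and each $F_{s_1,\ldots,s_k}$ occurring in the decomposition of $F_2$ is the CDF of a stationary Markov chain of order $k-1$ by Corollary~2.1 of \cite{part1}. Thus only the $F_3$ summand requires genuine approximation.

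For this, fix $\varepsilon>0$. I apply Proposition~\ref{prop-horiac} to $F_3$, which is permissible because $F_3$ is singular continuous and, by Theorem~1 of \cite{part1}, corresponds to a stationary digit process; choose $m\ge1$ so large that $\sup_{x\in[0,1]}|F_3(x)-F_3^{(m)}(x)|<\varepsilon$, and set
\[
G\coloneqq\alpha_1 F_1+\alpha_2 F_2+\alpha_3 F_3^{(m)}.
\]
Then $G$ is an at most countable convex combination of CDFs of stationary Markov chains, possibly of different orders, and the triangle inequality gives $\sup_{x\in\mathbb R}|F(x)-G(x)|\le\alpha_3\varepsilon\le\varepsilon$; letting $\varepsilon\to0$ yields the claim. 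I expect no real obstacle: the argument is just assembly of already-established results, and the only bookkeeping is to verify that the mixing weights in $G$ still sum to $1$, which is immediate from $\alpha_1+\alpha_2+\alpha_3=1$.
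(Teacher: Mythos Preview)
Your argument is correct and is exactly the assembly the paper has in mind: it states that the corollary follows ``immediately'' by combining Proposition~\ref{prop-horiac} with the decomposition results of \cite{part1} recalled in Section~\ref{s:setting}, and you have simply spelled out those details (identifying $F_1$ and each $F_{s_1,\ldots,s_k}$ with Markov-chain CDFs and approximating $F_3$ via Proposition~\ref{prop-horiac}).
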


\subsection{Bernoulli schemes}\label{s:order 0} 

Assume that $X_1,X_2,\ldots$ are IID (a stationary 0'th order Markov chain or a Bernoulli scheme) with distribution $(\pi_0,\ldots,\pi_{q-1})$. Seemingly this simple case is the most treated case in the literature on singular functions, and we discuss it
since various results easily follow from 
our general results above. Although it is a 
 well-studied case in probability theory, see e.g.\ Example 31.1 in \cite{Billingsley1995}, the authors in \cite{Okamoto2007} and \cite{Kyeonghee} do not acknowledge the simple probabilistic nature of the functions/CDFs they construct (namely that they are just considered the case of a Bernoulli scheme). 
 
By Theorem~\ref{prop:markov_singular}, $F$ is either the uniform CDF on $[0,1]$ (the case $\pi_0=\ldots=\pi_{q-1}=1/q$) or a singular CDF on 
$[0,1]$. This law of pure type is a well-known result due to Jessen and Wintner \cite{Jessen}, 
which states that any convergent infinite convolution of discrete measures is either singular or absolutely continuous with respect to Lebesgue measure. On the other hand, Theorem~\ref{prop:markov_singular} gives a law of pure type for the case of Markov chains of any order. 

Moreover, $F$ possesses the following properties.
For any $i\in\{0,\ldots,q-1\}$,  $F(x)=H(x-i/(q-1))$ if $\pi_i=1$. So suppose that $\pi_i<1$, $i=0,\ldots,q-1$.
Defining $0^0\coloneqq1$ and if $n_i(x_1,\ldots,x_n)$ is the number of times $x_1,\ldots,x_n$ are equal to $i$ (with $n_i(x_1,\ldots,x_{n-1},k)=n_i(k)$ if $n=1$), then for every $x=(0.x_1x_2\ldots)_q\in[0,1]$,
\[p(x_1,\ldots,x_n)=\prod_{i=0}^{q-1}\pi_i^{n_i(x_1,\ldots,x_n)}\]
and
\begin{equation}\label{e:FIID}
F(x)= 
\sum_{n\ge1:\,x_n\ge1}\sum_{k=0}^{x_n-1}\prod_{i=0}^{q-1}\pi_i^{n_i(x_1,\ldots,x_{n-1},k)},
\end{equation}
 cf.\ Proposition~\ref{t:useful}. Thus $F$ is continuous, cf. Theorem~\ref{prop:markov_singular},  
 and 
 $F$ is strictly increasing at $x\in[0,1]$ if and only if 
$\pi_{x_j}>0$ for every digit $x_j$ which appears in a base-$q$ expansion of $x$ (i.e., one of the two base-$q$ expansions of $x$ if $x\in\mathbb Q_q$), cf.\ Proposition~\ref{prop:useful:increasing}.


\begin{example}[Riesz-Nagy functions]\label{ex:3} 
	Consider the dyadic case $q=2$. Then  
	we call $Y$ (as defined at the beginning of Section~\ref{s:main results1}) a stationary Poisson process (conditioned on having no multiple points). 
	Writing $\pi_{1}=\pi$, let us assume $0<\pi<1$ and set $\alpha\coloneqq\pi/(1-\pi)$. 
	For any $x=(0.x_1x_2\ldots)_2\in[0,1]$ and corresponding point configuration $y$,
	\begin{equation}\label{e:F-special1}
	F(x)=\sum_{n\in y}\pi^{\# y_{n-1}}(1-\pi)^{n-\# y_{n-1}}=\sum_{n\in y}\frac{\alpha^{\# y_{n-1}}}{(1+\alpha)^n},
	\end{equation}
	cf.\ \eqref{e:F-id}.
	This CDF is called a Riesz-Nagy function, though it was already introduced in \cite{Cesaro1906} (without use of functional equations or probabilistic constructions).
The expression \eqref{e:F-special1} was also established in 
\cite{Rham1956} but as the solution to a certain functional equation for a bounded function, and in		
		\cite{Takacs1978} as the explicit form for the geometric construction in \cite{Salem1943} and \cite{Riesz1955} (see also 
\cite{Kairies1997} and	 
the neat probabilistic exposition in Example 31.1 in \cite{Billingsley1995}).
Note that $F$ is strictly increasing on $[0,1]$, it is the uniform CDF on $[0,1]$ if $\pi=\frac12$, and it is singular continuous otherwise. Denoting $F$ in \eqref{e:F-special1} by $F_\pi$, we have 
\begin{equation}\label{e:qqqqqqqq}
F_\pi(x)+F_{1-\pi}(1-x)=1,
\end{equation}
since $1-X=\sum_{k=1}^\infty (1-X_k)2^{-k}$ follows $F_{1-\pi}$.
Figure~1(a) shows plots of $F_\pi$ for $\pi=0.1,0.2,\dots$, 0.9 (from the top to the bottom).
\end{example}


\begin{example}[Cantor function and related cases]\label{ex:cantor}	
	Consider the triadic case $q=3$ and   
	suppose that  $0<\pi_0<1$ and $\pi_1=0$. Then the distribution of $X$ is concentrated on the Cantor set 
	\[C=\{(0.x_1x_2\ldots)_3\,|\,x_1,x_2,\ldots\in\{0,2\}\}=[0,1]\setminus\bigcup_{n=1}^\infty\bigcup_{k=1}^{2^{n-1}}I_{k,n}, \]
	where each $I_{k,n}$ is one of the $2^{n-1}$ intervals of the form $(a_k+ 1/3^n, a_k + 2/3^n)$, where $a_k = \sum_{i=1}^{n-1}x_i 3^{-i}$ and $x_i$ is either $0$ or $2$. For any $x=(0.x_1x_2\ldots)_3\in[0,1]$, setting $\gamma\coloneqq\pi_0/(1-\pi_0)$ and $n_x\coloneqq\inf\{n\in\mathbb N\,|\,x_n=1\}$ with $\inf\emptyset\coloneqq\infty$,
	we obtain from \eqref{e:F-id1} and a straightforward calculation
	that
	\begin{align*}
	F(x)=\sum_{1\le n< n_x:\,x_n=2}\frac{\gamma^{n_0(x_1,\dots,x_{n-1})+1}}{(1+\gamma)^{n}}
	+
	\begin{cases}
	\frac{\gamma^{1+n_0(x_1,\ldots,x_{n_x-1})}}{(1+\gamma)^{n_x}}    
	& \mbox{if }n_x<\infty,\\
	0 & \mbox{else}.
	\end{cases}
	\end{align*}
	Here $F$ is singular continuous, strictly increasing on $C$, and constant on each connected component of $[0,1]\setminus C$ (the union of the removed middle thirds 
	$I_{k,n}$). 
	Particularly, if $\pi_0=\frac12$, then 
	\[ 
	F(x)=2^{-n_x}+
	\sum_{1\le n< n_x:\,x_n=2}2^{-n}
	\] 
	is the Cantor function, cf.\ Equation (1.2) and Corollary~5.9 in \cite{DOVGOSHEY2006}. %
\end{example}

\subsection{Binary Markov chains of order 1}\label{s:binaryMC}

	Assume that $q=2$ and $\{X_n\}_{n\ge1}$ is a stationary ergodic 
	Markov chain of order 1, that is,
	\begin{equation}\label{e:two-state-MC}
	\pi_{0}=\frac{p_0}{p_0+p_1},\qquad \pi_{1}=\frac{p_1}{p_0+p_1},\qquad
	\begin{bmatrix}
	\pi_{0,0} & \pi_{0,1} \\
	\pi_{1,0} & \pi_{1,1}
	\end{bmatrix}=
	\begin{bmatrix}
	1-p_1 & p_1 \\
	p_0 & 1-p_0
	\end{bmatrix}, 
	\end{equation}
	where $0<p_0\le1$, $0<p_1\le1$, and at least one of $p_0$ and $p_1$ is strictly less than $1$.  
By \eqref{e:F-id}, 
	for any $x\in[0,1]$, 
	we have  
	\begin{align}
	F(x)=\pi_0^{1-x_1}\pi_1^{x_1}\sum_{n:\, x_n=1}p_0^{x_{n-1}}(1-p_1)^{1-x_{n-1}}\prod_{i,j=0}^1\pi_{i,j}^{n_{i,j}(x_1,\ldots,x_{n-1})}
	\label{e:wwww}
	\end{align}
	where $0^0\coloneqq1$ and $n_{i,j}(x_1,\ldots,x_{n-1})$ is the number of times the sequence $i,j$ occurs in the sequence $x_1,\ldots,x_{n-1}$ (if $n\le2$ we set $n_{i,j}(x_1,\ldots,x_{n-1})$ equal to 0). 
	Here
	$F$ is either the uniform CDF on $[0,1]$ (the case $p_0=p_1=\tfrac12$) or singular continuous, cf.\  Theorem~\ref{prop:markov_singular} (this also follows from \cite{Dym1968} since $\{X_n\}_{n\ge1}$ is stationary and ergodic).	
	 Furthermore, $F$ is not strictly increasing at $x=(0.x_1x_2\ldots)_2\in[0,1]$ if and only if either  $p_0=1$ and $x_k=x_{k+1}=1$ for some $k\ge1$, or $p_1=1$ and $x_k=x_{k+1}=0$ for some $k\ge1$. This follows from \eqref{e:two-state-MC} 
	 and 
	 Proposition~\ref{prop:useful:increasing}. 

\begin{example}[Ising model]\label{ex:symmetry}
	Let
	$p_0=p_1=\pi\in(0,1)$, so 
	$\pi_0=\pi_1=\tfrac12$, $\pi_{0,0}=\pi_{1,1}=1-\pi$, and $\pi_{0,1}=\pi_{1,0}=\pi$. Defining $\beta\coloneqq\pi/(1-\pi)$, then 
	\[
	p( x_1,\ldots,x_n)=
	\frac12(1+\beta)^{1-n} \beta^{\sum_{i=1}^{n-1}1[x_i\not=x_{i+1}]},
	\]
which is the probability mass function of a finite Ising model 
\cite{Ising1925}.
For every $x\in[0,1]$, \eqref{e:wwww} reduces to
	\begin{equation}\label{e:ppp}
	F(x) 
	=
	\frac12
	\sum_{n:\, x_n=1}\frac{\beta^{m(x_1,\ldots,x_{n-1})}}{(1+\beta)^{n-1}},
	\end{equation}
	where 
	$m(x_1,\ldots,x_{n-1})$ is the number of switches in the sequence $x_1,\ldots,x_{n-1}$ (if $n\le2$ we set $m(x_1,\ldots,x_{n-1})$ equal to 0). 
	Here $F$ is strictly increasing on $[0,1]$, it is the uniform CDF on $[0,1]$ if $\pi=\frac12$, and it is singular continuous otherwise. As in Example~\ref{ex:3} we see that \eqref{e:qqqqqqqq} is satisfied when $F_\pi$ now denotes  $F$ in \eqref{e:ppp}.
	Figure~1(b) shows plots of $F_\pi$ for $\pi=0.1,0.2,\ldots,0.9$ (from the top to the bottom when considering the left part of the curves and from the bottom to the top when considering the right part of the curves).	
\end{example}


\section{Renewal processes}\label{s:renewal-general}

Let $q=2$ and $Z_0,Z_1,Z_2,\ldots$ be independent random variables with state space $\mathbb N$, where $Z_1,Z_2,\ldots$ are identically distributed with finite mean $\mu$, and $\mathrm P(Z_0=n)=\mathrm P(Z_1\ge n)/\mu$ for $n\in\mathbb N$. In other words, $Y=\{Z_0,Z_0+Z_1,Z_0+Z_1+Z_2,\ldots\}$ is a stationary delayed renewal process, see 
Section 2.4 in \cite{Soshnikov2000}, Example 1.7 in \cite{LyonsSteif2003}, 
and the references therein. Considering $\{X_n\}_{n\ge1}$, with $X_n=1_Y(n)$, this process is stationary. 
The following theorem characterizes $F$ in terms of the distribution of $Z_1$.

\begin{theorem}\label{prop:renewal_singular}
	Let 
	$\{X_n\}_{n\geq 1}$ be the binary process obtained from a stationary delayed renewal process $Y$ as described above. There are three cases:
\begin{enumerate} 
\item[{\rm (I)}] If $Z_1$ is geometrically distributed with mean $2$, then 
$F$ is the uniform CDF on $[0,1]$.
\item[{\rm (II)}] If $Z_1$ is degenerated, i.e., $\rmP(Z_1=k)=1$ for some $k\in \mathbb{N}$, then $F$ is the 
uniform distribution on the set $\{2^{-\ell}/(1-2^{-k})\mid\ell\in\{1,\ldots,k\}\}$. 
\item[{\rm (III)}] Otherwise $F$ is singular continuous. 
\end{enumerate}
Moreover,
\begin{enumerate}
\item[{\rm (IV)}] $F$ is strictly increasing on $[0,1]$ if and only if $\mathrm P(Z_1=n)>0$ for all $n\in\mathbb N$.
\end{enumerate}
\end{theorem}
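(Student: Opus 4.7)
The backbone of my approach is the explicit formula for $p(x_1,\ldots,x_n)$ afforded by the renewal structure. Let $r\ge0$ be the number of $1$'s in $x_1,\ldots,x_n$, located at positions $n_1<\cdots<n_r$, and set $g_i\coloneqq n_{i+1}-n_i$. Conditioning on the inter-arrival times yields
\begin{equation}\label{e:renewalp}
p(x_1,\ldots,x_n)=\begin{cases}\mathrm P(Z_0>n), & r=0,\\[2pt] \mathrm P(Z_0=n_1)\,\bigl(\prod_{i=1}^{r-1}\mathrm P(Z_1=g_i)\bigr)\,\mathrm P(Z_1>n-n_r), & r\ge1.\end{cases}
\end{equation}
For (I), with $\mathrm P(Z_1=n)=2^{-n}$ (and hence $\mathrm P(Z_0=n)=2^{-n}$), every pattern satisfies $p(x_1,\ldots,x_n)=2^{-n}$, so $\{X_n\}_{n\ge1}$ is IID $\mathrm{Bernoulli}(1/2)$ and $F(x)=x$. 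For (II), if $Z_1=k$ almost surely, then $Z_0$ is uniform on $\{1,\ldots,k\}$ and conditionally on $Z_0=\ell$, $X=\sum_{j\ge0}2^{-(\ell+jk)}=2^{-\ell}/(1-2^{-k})$, giving the claimed discrete distribution on $k$ atoms.

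For (IV) I would apply Proposition~\ref{prop:useful:increasing} via \eqref{e:renewalp}. The hypothesis $\mathrm P(Z_1=n)>0$ for all $n\in\mathbb N$ makes every factor in \eqref{e:renewalp} positive, so the sufficient criterion of Proposition~\ref{prop:useful:increasing} applies and $F$ is strictly increasing on $[0,1]$. Conversely, if $\mathrm P(Z_1=n_0)=0$ for some $n_0$, then for any non-dyadic $x$ whose expansion starts with $1\,0^{n_0-1}\,1$, \eqref{e:renewalp} forces $p(x_1,\ldots,x_{n_0+1})=0$, so by Proposition~\ref{prop:useful:increasing} $F$ is not strictly increasing at $x$.

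The substance of the theorem is in (III). For continuity I would use Proposition~\ref{prop:useful:cont} and show $p(x_1,\ldots,x_n)\to0$ for every infinite sequence $(x_n)$. Since $p(x_1,\ldots,x_n)$ is non-increasing in $n$, it suffices to test a convenient subsequence. If $(x_n)$ contains only finitely many $1$'s, then the terminal factor $\mathrm P(Z_1>n-n_r)$ (or $\mathrm P(Z_0>n)$) tends to $0$ using $E[Z_1]<\infty$. If $(x_n)$ contains infinitely many $1$'s, then at $n=n_r$,
\[ p(x_1,\ldots,x_{n_r})\le \mathrm P(Z_0=n_1)\bigl(\max_{k\ge1}\mathrm P(Z_1=k)\bigr)^{r-1}, \]
and since $Z_1$ is non-degenerate in case~(III), the maximum on the right is strictly less than $1$, so the bound vanishes as $r\to\infty$.

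For singularity in (III) I would invoke the law of pure types for stationary ergodic binary processes (cf.\ \cite{Dym1968}, as already used in Section~\ref{s:binaryMC}): such an $F$ is either the uniform CDF on $[0,1]$ or singular. A stationary delayed renewal process with non-degenerate $Z_1$ is ergodic, which I would verify by tail-triviality for the underlying i.i.d.\ sequence $(Z_1,Z_2,\ldots)$ pushed forward by the renewal map. The uniform alternative is excluded because $F(x)=x$ would force $\{X_n\}_{n\ge1}$ to be IID $\mathrm{Bernoulli}(1/2)$, and a direct comparison using \eqref{e:renewalp} then forces $Z_1$ to be geometric with mean $2$, placing us in case~(I) and contradicting our standing hypothesis. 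Combined with the preceding continuity argument, this gives $F$ singular continuous. The main obstacle I anticipate is the clean verification of ergodicity for non-degenerate stationary renewal processes; the rest reduces to bookkeeping with \eqref{e:renewalp}.
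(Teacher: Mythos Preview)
Your argument is correct and coincides with the paper's for (I), (II), (IV), and the continuity half of (III). For the singularity half of (III), however, you take a genuinely different route. The paper does not invoke ergodicity or the Dym pure-type law; instead it applies Corollary~\ref{cor:useful:derivatives} directly: if $F'(x)>0$ at some $x\in(0,1)\setminus\mathbb Q_2$ where $F$ is differentiable, then with $\xi_1=1$ and $\xi_2=\cdots=\xi_{m+1}=0$ the ratio $p(x_1,\ldots,x_n,\xi_1,\ldots,\xi_{m+1})/p(x_1,\ldots,x_n,\xi_1,\ldots,\xi_m)$ must tend to $1/2$, whereas by \eqref{eq:renewal_probability} this ratio is identically $\mathrm P(Z_1>m\mid Z_1>m-1)$, which fails to equal $1/2$ for some $m$ exactly when $Z_1$ is not geometric with mean~$2$. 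This gives $F'=0$ Lebesgue a.e.\ in two lines and entirely sidesteps the ergodicity verification you flag as your main obstacle. Your route, by contrast, places the result inside the classical ergodic-theoretic framework and would extend verbatim to any stationary ergodic $\{X_n\}_{n\ge1}$, but at the cost of importing an external theorem and carrying out the (admittedly standard, via regeneration) ergodicity check for the stationary renewal shift.
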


\begin{remark} Theorem~\ref{prop:renewal_singular} is also establishing that $F$ is of pure type. The case (I) corresponds of course to the `fair coin case' (the $X_n$ are IID with $\mathrm P(X_n=0)=\mathrm P(X_n=1)=\tfrac12$). 
\end{remark} 

 In order to describe the finite dimensional probabilities we need the following notation.
 For any $x=(0.x_1x_2\ldots)_2\in[0,1]$ and corresponding point configuration $y$ (see the lines below \eqref{e:pdef}), if $z_0+\ldots+z_k$ is the $(k+1)$'th point in $y$, we define $\sup\emptyset\coloneqq-1$ and
\begin{align*}
m_n&=m(x_1,\ldots,x_n)\coloneqq\sup\{k\in\{0,1,\ldots\}\,|\,z_0+\ldots+z_k\le n\}\\
&=
\begin{cases} -1&\mbox{if }x_1=\ldots=x_n=0\\
x_1+\ldots+x_n &\mbox{otherwise}\end{cases}
\end{align*} 
meaning that in the latter case $m_n$ is the number of points before time $n$.
Then
\begin{align}\label{eq:renewal_probability}
p(x_1,\ldots,x_n)=\mathrm P(Z_{m_n+1}>n-z_0-\ldots-z_{m_n})\prod_{\ell=0}^{m_n}\mathrm P(Z_\ell=z_\ell),
\end{align}
which in the case $m_n=-1$ 
is interpreted as $p(x_1,\ldots,x_n)=\mathrm P(Z_0>n)$. 


The following example of a renewal process is in fact also an example of a determinantal point process, see Example 1.7 in \cite{LyonsSteif2003} (see also \cite{Soshnikov2000}). 

\begin{example}[A special renewal process]\label{ex:6}
	Suppose $Z_1-1$ is negative binomially distributed with parameters 2 and $\pi\in(0,1)$, so 
	\[\mathrm P(Z_1=n)=n(1-\pi)^2\pi^{n-1},\qquad n\in\mathbb N,\]
	and $\mu=(1+\pi)/(1-\pi)<\infty$.
Then 
	\begin{align*}
	\mathrm P\left(Z_1>z_1\right) 
	&=(1-\pi)(z_1+1)\pi^{z_1}+\pi^{z_1+1},\\
	\mathrm P(Z_0=z_0)&=\frac{(1-\pi)^2}{1+\pi}z_0\pi^{z_0-1} + \frac{1-\pi}{1+\pi}\pi^{z_0},\\
	\mathrm P\left(Z_0>z_0\right)
	&=\frac{1-\pi}{1+\pi}(z_0+1)\pi^{z_0}+2\frac{\pi^{z_0+1}}{1+\pi},
	\end{align*}
	where both $Z_0$ and $Z_1$ have support equal to $\mathbb N$. 
	Thus, for any $x\in(0,1]$,
	   we obtain from \eqref{e:F-id} and \eqref{eq:renewal_probability}  that
	\begin{equation}\label{eq:renewal_closed_form}
		\begin{aligned}
	F(x)=&\,\frac{1-\pi}{1+\pi}(z_0+1)\pi^{z_0}+\frac{2}{1+\pi}\pi^{z_0+1}\\
	&\,+\left[\frac{(1-\pi)^2}{1+\pi}z_0\pi^{z_0-1}+\frac{1-\pi}{1+\pi}\pi^{z_0}\right]\\
&\,	\times\sum_{k=1}^\infty \left[(1-\pi)(z_k+1)+\pi\right](1-\pi)^{2k-2}\pi^{z_1+\ldots z_k-k+1}\prod_{\ell=1}^{k-1}z_\ell.
	\end{aligned}
	\end{equation}	
	Here  $F$ is singular continuous and strictly increasing on $[0,1]$, cf.\ Theorem~\ref{prop:renewal_singular}.  
Denoting $F$ in \eqref{eq:renewal_closed_form} by $F_\pi$, Figure~1(c) shows plots of $F_\pi$ for $\pi=0.1,0.2,\ldots,0.9$ (from the bottom to the top).	
\end{example}

\section{Mixtures of stationary processes}\label{s:mixture-general} 

In this section we consider $F$ to be a mixture of CDFs corresponding to random variables with stationary digits. Therefore, 
 we imagine that $\Pi$ is some random variable (or `random parameter') used to specify the distribution of $\{X_n\}_{n\ge1}$ conditioned on $\Pi$, and write
\[F_\Pi(x)\coloneqq \mathrm P(X\le x\mid\Pi),\qquad x\in\mathbb R,\] 
so that the (unconditional) CDF of $X$ is given by 
\begin{equation}\label{e:mixtureF}
F=\mathrm EF_\Pi.
\end{equation}
Examples of such mixture models will be given in Sections~\ref{s:mixture-MC-general}--\ref{s:mixture-renewal-general}, where the two simplest examples are Examples~\ref{ex:4} and \ref{ex:MCbeta} below: Briefly, they relate to the Poisson model case in Example~\ref{ex:3} and the Ising model case in Example~\ref{ex:symmetry}, respectively, by replacing the probability parameter $\pi$ by a random variable following a beta distribution so that we obtain a mixed Poisson process in Example~\ref{ex:4} and a mixed Markov chain in Example~\ref{ex:MCbeta}. Corresponding plots of $F$ for various choices of beta distributions are seen in  Figure~1(d)-(e) and they show a different behaviour as compared to the Poisson and Ising model cases shown in
 Figure~1(a)-(b) (as well as to Figure~1(c)). We also find the present section interesting for more theoretical reasons: Indeed  our main results (Theorems~\ref{prop:b1} and \ref{prop:b111}) contain the deepest results of this paper, cf.\ Appendix~A.10-A.11. 

  
We need some notation. We denote the state space of $\Pi$ by $\Omega$ and assume it is equipped with a $\sigma$-algebra $\mathcal A$ depending on the context; specific examples are given in Section~\ref{s:mixture-MC-general} 
and  
in Appendix~A.10-A.11 
we specify $\Omega$ in the general context of a Markov chain and a renewal process as considered in Propositions~\ref{prop:b1}--\ref{prop:b111}. For $\pi\in\Pi$ and $x\in\mathbb R$, we let $F_\pi(x)\coloneqq \mathrm P(X\le x\mid\Pi=\pi)$.
For $x_1,\ldots,x_n\in\{0,\ldots,q-1\}$, we write 
 \[p_\Pi(x_1,\ldots,x_n)\coloneqq \mathrm P(X_1=x_1,\ldots,X_n=x_n\mid \Pi)\]
so that  
\begin{equation}\label{e:mixture-p}
p(x_1,\ldots,x_n)=\mathrm Ep_\Pi(x_1,\ldots,x_n)
\end{equation} 
specifies the (unconditional) finite dimensional probabilities of $\{X_n\}_{n\ge1}$. Similarly, for $\pi\in\Pi$, we let $p_\pi(x_1,\ldots,x_n)\coloneqq \mathrm P(X_1=x_1,\ldots,X_n=x_n\mid \Pi=\pi)$.
 For any $x\in[0,1]$, define  
$$A_x\coloneqq\{ \pi\in\Omega\mid F_\pi \textup{ is discontinuous at } x\}$$
and
 $$B_x\coloneqq\{\pi\in\Omega\mid F_\pi\mbox{ is strictly increasing at }x\}.$$
We assume that $A_x\in\mathcal A$, $B_x\in\mathcal A$, and $\{\pi\in\Omega\mid F_\pi=F_1\}\in\mathcal A$ (this will be obviously satisfied for the specific examples considered in Section~\ref{s:mixture-MC-general}). Finally, let $1[\cdot]$ denote the indicator function. 

The following proposition is a straightforward consequence of Propositions~\ref{t:useful}--\ref{prop:useful:increasing}, the law of total expectation, and Lebesgue's dominated convergence theorem, where we recall that $F_1$ denotes the uniform CDF on $[0,1]$ (cf.\ Section~\ref{s:setting}) and when $q=2$, $Y$ is the point process corresponding to $X$ (cf.\ the beginning of Section~\ref{s:main results1}). 
 
\begin{proposition}\label{prop:mixture-general} 
 Assume $F_\Pi$ satisfies the stationarity condition \eqref{e:3} almost surely. Then
 $F$ given by \eqref{e:mixtureF} has the following properties.  
\begin{enumerate}
\item[{\rm (I)}] $F$ is a CDF on $[0,1]$ which satisfies \eqref{e:3}.
\item[{\rm (II)}] For any $x=(0.x_1x_2\ldots)_q\in[0,1]$, 
\[ 
F(x)= 
\sum_{n\ge1:\,x_n\ge1}\sum_{k=0}^{x_n-1}\mathrm Ep_\Pi(x_1,\ldots,x_{n-1},k) 
\] 
(with $Ep_\Pi(x_1,\ldots,x_{n-1},k)=Ep_\Pi(k)$ if $n=1$).
If in addition $q=2$ and $y$ is the point configuration corresponding to $x$,  
then
\[ 
F(x)= 
\sum_{n\in y}\mathrm E\mathrm P(Y_{n-1}=y_{n-1},\, n\not\in Y\mid\Pi).
\] 
\item[{\rm (III)}] For any $x\in[0,1]$, $F$ is continuous at $x$ if and only if $\mathrm P(\Pi\in A_x)=0$.
\item[{\rm (IV)}] For any $x\in[0,1]$, $F$ is strictly increasing at $x$ if and only if $\mathrm P(\Pi\in B_x)>0$.
\item[{\rm (V)}]   For any $x\in \mathbb{R}$, define
\begin{equation}\label{eq:G}
	G(x)\coloneqq\mathrm{E}( F_\Pi(x) 1[F_\Pi\neq F_1]).
\end{equation} 
Then $F$ is differentiable at $x$ if and only if $G$ is differentiable at $x$. Furthermore, for  Lebesgue almost all $x=(0.x_1x_2\ldots)_q\in[0,1]$, 
\begin{align}
F'(x)&= \rmP(F_\Pi=F_1)+G'(x) 
\label{e:min}
\end{align}
is equal to a constant in $[0,1]$. 
\end{enumerate}
\end{proposition}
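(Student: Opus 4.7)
\emph{Proof plan.} The overall strategy is to apply Propositions~\ref{t:useful}--\ref{prop:useful:increasing} (and Proposition~\ref{prop:useful:derivative}) conditionally on $\Pi$ and then unconditionalize via the law of total expectation, justifying every interchange of expectation with sums or limits by Tonelli's theorem or dominated convergence; these apply because the summands are non-negative and $F_\Pi$ is uniformly bounded by $1$.

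For (I), I would observe that $F=\mathrm EF_\Pi$ is a mixture of $[0,1]$-supported CDFs, hence itself a $[0,1]$-supported CDF, and that \eqref{e:3} is linear in $F$ and so passes from $F_\Pi$ (which satisfies \eqref{e:3} almost surely) to its mean. For (II), I would apply identity \eqref{e:F-id1} in Proposition~\ref{t:useful} conditionally on $\Pi$ and then take expectation, interchanging $\mathrm E$ with the double sum by Tonelli's theorem. The residual $\mathrm P(X=x)$ term from Proposition~\ref{t:useful} disappears because stationarity of $F$, already established in (I), together with Theorem~1 in \cite{part1}, ensures that $X$ avoids $\mathbb Q_q$ almost surely, making the base-$q$ ambiguity irrelevant.

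For (III), the jump $F(x)-F(x^-)$ equals $\mathrm E[F_\Pi(x)-F_\Pi(x^-)]$ by dominated convergence; since the integrand is non-negative this expectation vanishes iff the integrand vanishes almost surely, yielding the stated equivalence. For (IV), the forward direction is immediate by linearity: if $\mathrm P(\Pi\in B_x)>0$, then for each $s<x<t$ the non-negative random variable $F_\Pi(t)-F_\Pi(s)$ is strictly positive on $\{\Pi\in B_x\}$, whence $F(t)-F(s)>0$. For the converse I would invoke Proposition~\ref{prop:useful:increasing} applied both to $F$ (stationary by (I)) and to each $F_\Pi$ to recast strict monotonicity as positivity of the quantities $p(x_1,\ldots,x_n)=\mathrm Ep_\Pi(x_1,\ldots,x_n)$, and then use continuity of probability from above on the decreasing events $\{p_\Pi(x_1,\ldots,x_n)>0\}$, whose intersection describes $B_x$ up to the modification at base-$q$ fractions.

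Part (V) is the most direct: because $F_1(x)=x$ on $[0,1]$, the decomposition $F(x)=x\,\mathrm P(F_\Pi=F_1)+G(x)$ holds there, which immediately yields the differentiability equivalence and the formula \eqref{e:min}; that $F'$ exists and equals a constant in $[0,1]$ for Lebesgue almost every $x$ is then a direct application of Proposition~\ref{prop:useful:derivative} to the stationary CDF $F$. I expect the converse in (IV) to be the main obstacle: passing from the per-$n$ positivity $\mathrm Ep_\Pi(x_1,\ldots,x_n)>0$ to positivity of the tail event $\mathrm P(\bigcap_n\{p_\Pi(x_1,\ldots,x_n)>0\})>0$ is not automatic for a general decreasing sequence of events, and carrying it through will require exploiting the full conditional-stationarity structure of $F_\Pi$ rather than only marginal shift-invariance.
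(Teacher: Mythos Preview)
Your plan for (I), (II), (III), (V) and the forward direction of (IV) is correct and is exactly what the paper does: it records the proposition as ``a straightforward consequence of Propositions~\ref{t:useful}--\ref{prop:useful:increasing}, the law of total expectation, and Lebesgue's dominated convergence theorem'' and gives no further detail.

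Your instinct about the converse in (IV) is right, but the obstacle is worse than merely technical: in the stated generality the implication ``$F$ strictly increasing at $x\Rightarrow\mathrm P(\Pi\in B_x)>0$'' fails, so neither the paper's one-line appeal nor the extra conditional-stationarity structure you hope to exploit can close the gap. Take $q=2$, $x=1/3=(0.\overline{01})_2$, and let $\Pi$ take values $\pi_1,\pi_2,\ldots$ with $\mathrm P(\Pi=\pi_k)=2^{-k}$, where conditionally on $\Pi=\pi_k$ the digit process is periodic of period $k{+}1$ with pattern $x_1\cdots x_k\,(1-x_{k+1})$ and uniformly random phase. Each conditional process is stationary; by construction $p_{\pi_k}(x_1,\ldots,x_k)\ge 1/(k{+}1)>0$ while $p_{\pi_k}(x_1,\ldots,x_{k+1})=0$, and $1/3$ is never one of the finitely many atoms of $F_{\pi_k}$ (for odd $k$ the pattern and the period-$(k{+}1)$ block of $1/3$ contain different numbers of ones; for even $k$ the period $k{+}1$ is odd). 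Hence every $F_{\pi_k}$ is constant in a neighbourhood of $1/3$, so $\mathrm P(\Pi\in B_x)=0$. Yet for the mixture $p(x_1,\ldots,x_n)\ge 2^{-n}p_{\pi_n}(x_1,\ldots,x_n)>0$ for every $n$, and Proposition~\ref{prop:useful:increasing} then shows that $F$ \emph{is} strictly increasing at $x$. This is precisely the failure mode you anticipated---the decreasing events $\{p_\Pi(x_1,\ldots,x_n)>0\}$ each have positive probability but intersect in a null event---and conditional stationarity alone does not prevent it. The ``only if'' in (IV) should therefore be read as requiring additional structural hypotheses of the sort implicitly present in the Markov and renewal settings of Theorems~\ref{prop:b1} and~\ref{prop:b111}.
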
 

In connection to \eqref{e:min} a natural question is when $G'(x)=0$: For every $n\in\mathbb N$ and every $x=(0.x_1x_2\ldots)_q\in[0,1]\setminus\mathbb Q_q$, consider the function $f_{x,n}\colon \Omega\mapsto[0,q^n]$ defined by
\[ 
	f_{x,n}(\pi)=q^np_\pi(x_1,\dots,x_n)1[F_\pi\neq F_1].
\] 
If $G\not=0$, then $G$ is proportional to a CDF satisfying \eqref{e:3}, and hence Proposition~\ref{prop:useful:derivative} and \eqref{e:mixture-p} imply that for Lebesgue almost all $x\in[0,1]\setminus\mathbb Q_q$,
\begin{equation}\label{e:tosset}
	G'(x) 
	=\lim_{n\to\infty} \mathrm{E}( f_{x,n}(\Pi)).
\end{equation}
However, it is not obvious if we can interchange the limit and the expectation in \eqref{e:tosset} and hence obtain a limit which is 0. The following proposition provides conditions ensuring that $G'=0$ almost everywhere on $[0,1]$.

\begin{proposition}\label{cor:G1G2}
 Let $F$ be given by \eqref{e:mixtureF} and suppose $F_\Pi$ satisfies the stationarity condition \eqref{e:3} almost surely. Assume there exist sets $V_1\subseteq V_2\subseteq\dots$ in $\mathcal{A}$ such that $\bigcup_{j=1}^\infty V_j=\{\pi\in \Omega \mid F_\pi\neq F_1\}$ and for all $j\in \mathbb{N}$ and Lebesgue almost all $x\in[0,1]\setminus\mathbb Q_q$, the sequence $\{f_{x,n}(\cdot)1[\cdot\in V_{j}]\}_{n\geq 1}$ is pointwise convergent to $0$ as $n\to\infty$ and dominated by a constant which may depend on $(x,j)$. Then $F'=\rmP(F_\Pi=F_1)$ almost everywhere on $[0,1]$. 
\end{proposition}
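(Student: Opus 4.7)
\medskip

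\noindent\textbf{Proof plan.} By Proposition~\ref{prop:mixture-general}(V), $F'(x)=\mathrm{P}(F_\Pi=F_1)+G'(x)$ for Lebesgue almost every $x\in[0,1]$, so the claim reduces to showing $G'=0$ a.e. The plan is to fix $j\in\mathbb N$ and write $G=G_j+H_j$, where
\[
G_j(x)\coloneqq \mathrm{E}\bigl(F_\Pi(x)1[\Pi\in V_j]\bigr),\qquad H_j(x)\coloneqq \mathrm{E}\bigl(F_\Pi(x)1[\Pi\notin V_j,\,F_\Pi\neq F_1]\bigr).
\]
Since every $F_\Pi$ satisfies \eqref{e:3} almost surely and is uniformly bounded by $1$, taking expectations through the functional equation shows that both $G_j$ and $H_j$ satisfy \eqref{e:3}. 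When their respective total masses are positive, normalizing yields CDFs on $[0,1]$ satisfying \eqref{e:3}, hence CDFs of stationary processes by Theorem~1 in \cite{part1}, whose finite-dimensional distributions are respectively $\mathrm{E}(p_\Pi(x_1,\ldots,x_n)1[\Pi\in V_j])/G_j(1)$ and the analogous formula for $H_j$.

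The next step is to apply Proposition~\ref{prop:useful:derivative} to $G_j/G_j(1)$ (the case $G_j(1)=0$ being trivial) to obtain, for a.e.\ $x\in[0,1]\setminus\mathbb Q_q$,
\[
G_j'(x)=\lim_{n\to\infty}\mathrm{E}\bigl(q^n p_\Pi(x_1,\ldots,x_n)1[\Pi\in V_j]\bigr)=\lim_{n\to\infty}\mathrm{E}\bigl(f_{x,n}(\Pi)1[\Pi\in V_j]\bigr),
\]
where the last equality uses $V_j\subseteq\{\pi:F_\pi\neq F_1\}$ so that the indicator $1[F_\pi\neq F_1]$ built into $f_{x,n}$ is redundant on $V_j$. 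By hypothesis the integrand converges pointwise to $0$ and is dominated by a constant $M_{x,j}$, so the dominated convergence theorem yields $G_j'(x)=0$ for almost every $x$.

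Since $G=G_j+H_j$ with both summands non-decreasing, $G'=G_j'+H_j'=H_j'$ almost everywhere. Integrating and using the standard bound on the derivative of a monotone function,
\[
\int_0^1 G'(x)\,dx=\int_0^1 H_j'(x)\,dx\leq H_j(1)-H_j(0)\leq \mathrm{P}(\Pi\notin V_j,\,F_\Pi\neq F_1).
\]
Because $V_j\uparrow\{F_\Pi\neq F_1\}$, continuity of measure sends the right-hand side to $0$ as $j\to\infty$; as $G'\geq 0$ a.e., we conclude $\int_0^1 G'=0$ and therefore $G'=0$ almost everywhere, which combined with Proposition~\ref{prop:mixture-general}(V) gives the claim.

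The main obstacle is the first step, namely verifying that the sub-distribution $G_j$ (arising by restricting the mixing variable to $V_j$) again satisfies the stationarity functional equation \eqref{e:3} and hence is eligible for the derivative formula of Proposition~\ref{prop:useful:derivative}; this rests on interchanging expectation with \eqref{e:3}, which is routine by Fubini, and on the equivalence between \eqref{e:3} and existence of a stationary representation provided by Theorem~1 in \cite{part1}. After that is in place, the argument is simply a dominated convergence on $V_j$ combined with an $L^1$-bound on $H_j'$ that vanishes with $j$.
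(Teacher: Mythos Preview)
Your proof is correct and follows the same overall decomposition as the paper: split $G$ into a piece supported on $V_j$ and its complement within $\{F_\pi\neq F_1\}$, then show the $V_j$-piece is singular by applying the derivative formula of Proposition~\ref{prop:useful:derivative} together with dominated convergence. Where you diverge is in the finishing step. The paper argues by contradiction: it uses that $G'$ equals a constant $c$ a.e.\ (from Proposition~\ref{prop:mixture-general}(V)), picks $j_0$ with $\mathrm{P}(\Pi\in U_{j_0})<c$ where $U_{j_0}$ is your complement set, and then observes that the normalized complement piece $J/\mathrm{P}(\Pi\in U_{j_0})$ is itself a stationary CDF whose a.e.\ derivative is a constant in $[0,1]$, forcing $c\le\mathrm{P}(\Pi\in U_{j_0})$, a contradiction. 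You instead bound $\int_0^1 G'=\int_0^1 H_j'\le H_j(1)\le\mathrm{P}(\Pi\notin V_j,\,F_\Pi\neq F_1)\to 0$ directly via Lebesgue's inequality for the derivative of a monotone function and let $j\to\infty$. Your route is arguably more elementary in that it does not invoke the constant-derivative property a second time and avoids the contradiction setup; the paper's route, on the other hand, exploits more of the structure already established in Proposition~\ref{prop:mixture-general}(V). Both arguments are short once the key observation---singularity of the $V_j$-piece---is in place.
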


\begin{remark} We apply Proposition~\ref{cor:G1G2} in the proofs of the theorems in Sections~\ref{s:mixture-MC-general}--\ref{s:mixture-renewal-general}. As in Proposition~\ref{cor:G1G2} these theorems give that $F$ is of pure type if and only if either $F_\Pi=F_1$ almost surely or $F_\Pi\not=F_1$ almost surely.
\end{remark}   

\subsection{Mixtures of Markov chains}\label{s:mixture-MC-general} 

The following proposition considers the mixture of CDFs corresponding to stationary Markov chains of a fixed order.

\begin{theorem}\label{prop:b1}
	Let $m\in\mathbb N$ and suppose $\{X_n\}_{n\ge1}$ conditioned on $\Pi$ is a stationary Markov chain of order $m-1$.
	Then we have the following for $F$ given by the mixture CDF in \eqref{e:mixtureF}.
	\begin{enumerate} 
		\item[{\rm (I)}] 
		For any $x\in [0,1]$, $F$ has a discontinuity at $x$ if and only if $\mathrm P(\Pi\in A_x)>0$. 
		\item[{\rm (II)}] For any $x\in [0,1]$, $F$ is strictly increasing at $x$ if and only if $\mathrm P(\Pi\in B_x)>0$.
		\item[{\rm (III)}]  
		 For  Lebesgue almost all $x\in[0,1]$, $F'(x)=\mathrm P(F_\Pi=F_1)$.   
	\end{enumerate}
\end{theorem}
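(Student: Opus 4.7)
\textbf{Proof plan for Theorem~\ref{prop:b1}.}
Statements (I) and (II) are essentially restatements of Proposition~\ref{prop:mixture-general}(III) and (IV): since $\{X_n\}$ conditioned on $\Pi$ is a stationary Markov chain, $F_\Pi$ satisfies the stationarity condition \eqref{e:3} almost surely, so the hypothesis of Proposition~\ref{prop:mixture-general} is met, and (I)--(II) are just the contrapositive reformulations of its parts (III) and (IV). The plan for these is simply to appeal to Proposition~\ref{prop:mixture-general}.

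For (III) the plan is to invoke Proposition~\ref{cor:G1G2}, after noting via Proposition~\ref{prop:mixture-general}(V) and \eqref{e:min} that $F'(x) = \mathrm{P}(F_\Pi = F_1)$ almost everywhere is equivalent to $G'(x)=0$ almost everywhere. The main work is to exhibit nested measurable sets $V_1\subseteq V_2\subseteq\cdots$ with $\bigcup_j V_j = \{\pi : F_\pi\neq F_1\}$ on which $f_{x,n}(\cdot)1[\cdot\in V_j]$ can be simultaneously controlled. I would take
\[
V_j \coloneqq \bigl\{\pi : F_\pi\neq F_1 \text{ and for all }(a_1,\ldots,a_m),\ \pi_{a_1,\ldots,a_m}=0 \text{ or } \pi_{a_1,\ldots,a_m}\ge 1/j\bigr\}.
\]
The nesting and measurability are clear, and since each $\pi$ is described by only finitely many transition probabilities, any $\pi$ with $F_\pi\neq F_1$ lies in $V_j$ for $j$ large enough, so $\bigcup_j V_j = \{F_\pi\neq F_1\}$.

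The central step is to verify pointwise convergence and domination of $\{f_{x,n}(\cdot)1[\cdot\in V_j]\}_n$ for almost every $x$. I would use Borel's normal number theorem for $m$-blocks: for Lebesgue-almost every $x=(0.x_1x_2\ldots)_q$, the frequencies $N_n(a_1,\ldots,a_m)/n$ of every $m$-block converge to $q^{-m}$. Fix such an $x$ and $j$, and let $\pi\in V_j$. If some transition of $\pi$ vanishes, then by normality there is an $N(x)$ (independent of $\pi\in V_j$ because there are only finitely many zero-patterns) after which a forbidden block has appeared in $x_1,x_2,\ldots$; \eqref{e:Markovfinitedimdist} then forces $p_\pi(x_1,\ldots,x_n)=0$ eventually, giving $f_{x,n}(\pi)\to 0$ and $f_{x,n}(\pi)\le q^{N(x)}$. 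Otherwise all transitions lie in $[1/j,1]$ and \eqref{e:Markovfinitedimdist} together with the block-frequency limit gives
\[
\frac{1}{n}\log\bigl(q^n p_\pi(x_1,\ldots,x_n)\bigr) \longrightarrow S(\pi)\coloneqq \sum_{(a_1,\ldots,a_m)} q^{-m}\log(q\pi_{a_1,\ldots,a_m}),
\]
which by Jensen's inequality is $\le 0$ with equality iff $\pi$ is the uniform transition, i.e.\ iff $F_\pi = F_1$; hence $\sup\{S(\pi):\pi\in V_j\}\le -c_j$ for some $c_j>0$ by continuity of $S$ on the compact set of transitions with entries in $[1/j,1]$. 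Uniform boundedness of $|\log(q\pi_{a_1,\ldots,a_m})|$ on this set makes the above convergence uniform over $\pi\in V_j$, so for $n\ge n_0(x,j)$ one obtains $q^n p_\pi(x_1,\ldots,x_n)\le e^{-nc_j/2}$ uniformly, yielding both convergence to $0$ and the required bound $C(x,j)$. Proposition~\ref{cor:G1G2} then delivers (III).

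The main obstacle is arranging precisely this uniformity: for each individual $\pi$ the conclusion $q^n p_\pi(x_1,\ldots,x_n)\to 0$ follows at once from Theorem~\ref{prop:markov_singular} and Proposition~\ref{prop:useful:derivative}, but promoting it to a \emph{uniform} bound over the uncountable family $V_j$ is not automatic. The $1/j$ threshold in the definition of $V_j$ is exactly what enables the combination of compactness of the parameter region, strict upper-semicontinuity of $S$, and an $L^\infty$ bound on $\log(q\pi_b)$ needed to transfer the pointwise exponential decay into a bound independent of $\pi\in V_j$.
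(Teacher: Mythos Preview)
Parts (I) and (II) are handled exactly as in the paper, by direct appeal to Proposition~\ref{prop:mixture-general}(III)--(IV).

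For (III) your overall strategy---reduce to Proposition~\ref{cor:G1G2}, use normal numbers, and control $q^n p_\pi(x_1,\ldots,x_n)$ through the function $S(\pi)=\sum_{(a_1,\ldots,a_m)}q^{-m}\log(q\,\pi_{a_1,\ldots,a_m})$---is the same as the paper's. However, there is a genuine gap in your compactness step. With your choice
\[
V_j=\bigl\{\pi:F_\pi\neq F_1,\ \pi_{a_1,\ldots,a_m}\in\{0\}\cup[1/j,1]\ \text{for all }(a_1,\ldots,a_m)\bigr\},
\]
the ``no-zero'' part of $V_j$ is exactly the compact cube $K_j=\{\pi:\pi_{a_1,\ldots,a_m}\in[1/j,1]\}$ with the single point $\pi^*$ (the uniform transition) removed. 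Since $S$ is continuous on $K_j$ with $S(\pi^*)=0$ and $S<0$ elsewhere, one only gets $\sup_{\pi\in V_j}S(\pi)=0$, not $\le -c_j$ with $c_j>0$. In fact the required domination fails outright: for fixed $n$ the function $\pi\mapsto q^np_\pi(x_1,\ldots,x_n)$ is maximised at the empirical transition $\hat\pi_n$ determined by the block frequencies $N_n(\cdot)/n$; for a normal $x$ one has $\hat\pi_n\to\pi^*$ but $\hat\pi_n\neq\pi^*$ generically, so $\hat\pi_n\in V_j$ for large $j$, and a standard computation (e.g.\ via the law of the iterated logarithm for the digit counts) shows $\sup_n q^np_{\hat\pi_n}(x_1,\ldots,x_n)=\infty$ for Lebesgue-a.e.\ $x$. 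Hence $\{f_{x,n}(\cdot)1[\cdot\in V_j]\}_{n\ge1}$ is \emph{not} dominated by a constant, and Proposition~\ref{cor:G1G2} cannot be invoked.

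The fix, and this is what the paper does, is to build the separation from $\pi^*$ directly into $V_j$: set $\phi(\pi)\coloneqq e^{S(\pi)}=q\prod_{(a_1,\ldots,a_m)}\pi_{a_1,\ldots,a_m}^{q^{-m}}$ and take $V_j=\{\pi:\phi(\pi)\le q^{-1/j}\}$. Then $\bigcup_j V_j=\Omega\setminus\{\pi^*\}$ since $\pi^*$ is the unique maximiser of $\phi$, and on each $V_j$ one has $S(\pi)\le -(\log q)/j$ by construction. Combined with the uniform convergence of $\frac{1}{n}\log f_{x,n}(\pi)$ to $S(\pi)$ (which your argument already provides on the no-zero part, and which the paper handles in one stroke via the inequality $\phi_{x,n}(\pi)\le(q\,\phi(\pi)^{2j})^{1/(2j+1)}\le q^{-1/(2j+1)}$ for $n$ large), this yields both the pointwise convergence and the uniform bound that Proposition~\ref{cor:G1G2} demands.
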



\begin{example}[Mixtures of Bernoulli scheme and Poisson process constructions]\label{ex:4}
	Let $\Pi$  
	be a $q$-dimensional random vector with state space $\Omega=\Delta_q$, the $(q-1)$-dimensional simplex consisting of all probability distributions $\pi=(\pi_0,\ldots,\pi_{q-1})$. Since $\pi_0=1-\sum_{i=1}^{q-1}\pi_i$, we can  
 identify $\Delta_q$ by the set $B(q)\coloneqq\{(\pi_1,\ldots,\pi_{q-1})\mid\sum_{i=1}^{q-1}\pi_i\le1\}$, which we equip with the Borel $\sigma$-algebra. Suppose that $X_1,X_2,\ldots$ conditioned on $\Pi$ are
	IID with distribution $\Pi$.

	Combining \eqref{e:mixtureF} and  \eqref{e:FIID} gives for every $x=(0.x_1x_2\ldots)_q\in[0,1]$,
	\begin{equation}\label{e:FmixDirichlet}
	F(x)= 
	\sum_{n\ge1:\,x_n\ge1}\sum_{k=0}^{x_n-1}\mathrm E\prod_{i=0}^{q-1}\Pi_i^{n_i(x_1,\ldots,x_{n-1},k)}.
	\end{equation}
	We have $F'=\mathrm P(\Pi=(1/q,\ldots,1/q))$  almost everywhere on $[0,1]$, cf.\ Theorem~\ref{prop:b1}(III).
	Furthermore, $F$ is strictly increasing at $x\in [0,1]$ if and only if all $\Pi_{x_j}>0$ almost surely for every digit $x_j$ which occurs in a base-$q$ expansion of $x$, cf.\ Section~\ref{s:order 0} and Proposition~\ref{prop:mixture-general}(IV). 
	
	It follows from Section~\ref{s:order 0} that for any $x\in[0,1]$, $F_\pi$ is discontinuous at $x$ if and only if for some $k\in\{0,\ldots,q-1\}$ we have that $x=k/(q-1)$ and $\pi=\pi^{(k)}$, where $\pi^{(k)}=(\pi^{(k)}_0,\ldots,\pi^{(k)}_{q-1})$ is the $k$th standard vector (i.e., the vertex of $\Delta_q$ given by $\pi^{(k)}_k=1$). Note that $A_x=\{\pi^{(k)}\}$ consists of a singleton if $x=k/(q-1)$ with $k\in\{0,\ldots,q-1\}$, and  $A_x$ is empty otherwise. Hence, by Theorem~\ref{prop:b1}(I), $F$ has a discontinuity at $x\in[0,1]$ if and only if $x=k/(q-1)$ where $k\in\{0,\ldots,q-1\}$ and $\mathrm P(\Pi=\pi^{(k)})>0$. In particular,  $F$ is singular continuous if $\Pi$ is a continuous random variable.
 
 For example, suppose that $\Pi$ follows a 
 Dirichlet distribution with shape parameters $\beta_0>0,\ldots,\beta_{q-1}>0$ and probability density function given by
	\begin{equation}\label{e:dirichlet}
\frac{\Gamma(\sum_{i=0}^{q-1}\beta_i)}{\prod_{i=0}^{q-1}\Gamma(\beta_i)}\prod_{i=0}^{q-1}\pi_i^{\beta_i-1},\qquad (\pi_1,\ldots,\pi_{q-1})\in B(q).	
\end{equation}	
 It follows from the considerations above that
  $F$ is singular continuous and strictly increasing on $[0,1]$. Moreover, we can easily evaluate the expected value in \eqref{e:FmixDirichlet}. 
  
	For instance, if $q=2$,
	then $Y$ is 
	a mixture of stationary Poisson processes (conditioned on having no multiple points). If furthermore $\Pi_0$ follows a beta distribution (that is, $(\Pi_0,\Pi_1)$ follows a Dirichlet distribution
	with shape parameters $\beta_0>0$ and $\beta_1>0$), then for any $x\in[0,1]$ it follows from \eqref{e:FmixDirichlet} and \eqref{e:dirichlet} that  
	\begin{equation}\label{e:F-beta}
	F(x)=\sum_{n\in y}\frac{B(\beta_0+n-\#y_{n-1},\beta_1+\#y_{n-1})}{B(\beta_0,\beta_1)},
	\end{equation}
	where $B(\beta_0,\beta_1)=\Gamma(\beta_0)\Gamma(\beta_1)/\Gamma(\beta_0+\beta_1)$ is the beta function.
	This provides a large parametric family of CDFs, where
	\eqref{e:F-beta} agrees with \eqref{e:F-special1} in the limit as $\beta_0\rightarrow\infty$ and $\beta_1\rightarrow\infty$ such that $\beta_1/\beta_0\rightarrow \alpha$. It follows immediately from above that 
	$F$ in \eqref{e:F-beta} is singular continuous and strictly increasing on $[0,1]$. Denoting this CDF by $F_{\beta_0,\beta_1}$, we obtain from \eqref{e:qqqqqqqq} and \eqref{e:mixtureF} that
	\begin{equation}\label{e:Fbeta1beta2etc}
	F_{\beta_0,\beta_1}(x)+F_{\beta_1,\beta_0}(1-x)=1.
	\end{equation}
Figure~1(d) shows plots of $F_{\beta_0,\beta_1}$ when $\beta_0,\beta_1\in \{0.5,1,1.5\}$, where the curves overlap and hence are not so easy to distinguish but they are determined by that 
	\begin{align*}F_{0.5,1.5}(0.25)&>F_{0.5,1.0}(0.25)>F_{1.0,1.5}(0.25)>F_{0.5,0.5}(0.25)> F_{1.0,1.0}(0.25)\\
	&> F_{1.5,1.5}(0.25)>F_{1.5,1.0}(0.25)>F_{1.0,0.5}(0.25)>F_{1.5,0.5}(0.25).\end{align*}
\end{example}


\begin{example}[Mixture of Ising models]\label{ex:MCbeta}
	Along similar lines as in Example~\ref{ex:4}, we can impose a distribution on the parameters of a Markov chain of order 1 and obtain results in a similar way. For example, 
	suppose that $\pi$ in Example~\ref{ex:symmetry} is replaced by a random variable $\Pi$ which is beta distributed  with shape parameters $\beta_0>0$ and $\beta_1>0$. For any $x\in[0,1]$, combining \eqref{e:mixtureF} and \eqref{e:ppp} gives 
\begin{align}\label{eq:beta_markov}
	F(x)=\frac12
	\sum_{n\in y}\frac{B(\beta_0+m(y_{n-1}),\beta_1+n-1-m(y_{n-1}))}{B(\beta_0,\beta_1)}.
	\end{align}	
	This CDF is singular continuous and strictly increasing  on $[0,1]$: By Theorem~\ref{prop:b1}(I), $F$ is continuous at any $x\in[0,1]$, since $A_x$ is empty (see Section~\ref{s:binaryMC}); it follows immediately from Example~\ref{ex:symmetry}
	and Theorem~\ref{prop:b1}(II) that $F$ is strictly increasing  on $[0,1]$; and it follows from Theorem~\ref{prop:b1}(III) that $F'=0$ almost everywhere.
	Denoting $F$ in \eqref{eq:beta_markov} by $F_{\beta_0,\beta_1}$, it follows from Example~\ref{ex:symmetry}  and \eqref{e:mixtureF} that $F_{\beta_0,\beta_1}$ satisfies \eqref{e:Fbeta1beta2etc}.
	Figure~1(e) shows plots of $F_{\beta_0,\beta_1}$ when $\beta_0,\beta_1\in \{0.5,1,1.5\}$, where again the curves are not easy to distinguish but they are determined by that
	\begin{align*}F_{0.5, 1.5}(1/8)&>F_{0.5,1.0}(1/8)>F_{1.0,1.5}(1/8)>F_{0.5,0.5}(1/8)> F_{1.0,1.0}(1/8)\\
	&> F_{1.5,1.5}(1/8)>F_{1.5,0.5}(1/8)>F_{1.0,0.5}(1/8)>F_{1.0,1.5}(1/8).\end{align*}
\end{example}

\subsection{Mixtures of renewal processes}\label{s:mixture-renewal-general} 

 In this section we assume that $\{X_n\}_{n\ge1}$ conditioned on $\Pi$ is a binary process obtained from a stationary delayed renewal process as in Section~\ref{s:renewal-general}. We can think of value of $\Pi$ as specifying the 
 distribution of $Z_1$, where the only requirement is that $\mathrm EZ_1<\infty$. For each $k\in\mathbb N$,  denoting 
$F^{(k)}$ the CDF for the uniform distribution on $\{2^{-\ell}/(1-2^{-k})\mid\ell\in\{1,\ldots,k\}\}$, then for any $x\in[0,1]$,
\[A_x=\begin{cases}\{\pi\in\Omega\mid F_\pi=F^{(k)}\} & \text{if }x=2^{-\ell}/(1-2^{-k}),\\
\emptyset & \text{otherwise}.\end{cases}\]
This follows from Theorem~\ref{prop:renewal_singular}(II). 

\begin{theorem}\label{prop:b111}
	Suppose $\{X_n\}_{n\ge1}$ conditioned on $\Pi$ is a binary process obtained from a stationary delayed renewal process as above. 
	Then we have the following for $F$ given by the mixture CDF in \eqref{e:mixtureF}.
	\begin{enumerate} 
		\item[{\rm (I)}] 
		For any $x\in [0,1]$, $F$ has a discontinuity at $x$ if and only if $x=2^{-\ell}/(1-2^{-k})$, where $k\in\mathbb N$, $\ell\in\{1,\ldots,k\}$, and $\mathrm P(F_\Pi=F^{(k)})>0$.
		\item[{\rm (II)}] For any $x\in [0,1]$, $F$ is strictly increasing at $x$ if and only if $\mathrm P(\Pi\in B_x)>0$. 
		\item[{\rm (III)}] 
		 For Lebesgue almost all $x\in[0,1]$, $F'(x)=\mathrm P(F_\Pi=F_1)$.
	\end{enumerate}
\end{theorem}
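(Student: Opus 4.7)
Parts (I) and (II) follow immediately from Proposition~\ref{prop:mixture-general} combined with the description of $A_x$ preceding the theorem. For (I), Proposition~\ref{prop:mixture-general}(III) characterises continuity of $F$ at $x$ by $\mathrm P(\Pi\in A_x)=0$; by Theorem~\ref{prop:renewal_singular}(II) the set $A_x$ is empty unless $x=2^{-\ell}/(1-2^{-k})$, in which case $A_x=\{\pi : F_\pi=F^{(k)}\}$, and negating continuity yields (I). For (II), Proposition~\ref{prop:mixture-general}(IV) is precisely the claim.

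The substantive statement is (III), for which the plan is to apply Proposition~\ref{cor:G1G2}. First decompose $\{F_\pi\neq F_1\}=\Omega_{\mathrm{II}}\cup\Omega_{\mathrm{III}}$ along cases (II) and (III) of Theorem~\ref{prop:renewal_singular}. Since $\Omega_{\mathrm{II}}$ is indexed by $k\in\mathbb N$, the mixture $\mathrm E(F_\Pi\,1[\Pi\in\Omega_{\mathrm{II}}])$ is a countable mixture of the purely atomic CDFs $F^{(k)}$; its support lies in the countable set $\bigcup_{k}\{2^{-\ell}/(1-2^{-k}):1\le\ell\le k\}$ and its contribution to $F'$ vanishes Lebesgue-a.e. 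It therefore suffices to handle the case-(III) contribution via Proposition~\ref{cor:G1G2}.

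For $\Omega_{\mathrm{III}}$ we need sets $V_j\subseteq\Omega_{\mathrm{III}}$ with $\bigcup_jV_j=\Omega_{\mathrm{III}}$ such that, for Lebesgue-a.e.\ $x$, the sequence $f_{x,n}(\cdot)1[\cdot\in V_j]$ converges pointwise to $0$ and is dominated by a constant $C(x,j)$. The proposed construction starts from truncating the support of $Z_1$: let $V_j^{(a)}=\{\pi\in\Omega_{\mathrm{III}}:\mathrm P_\pi(Z_1\le j)=1\}$. Under Lebesgue measure on $[0,1]$ the digits are i.i.d.\ Bernoulli$(1/2)$, so the gaps $(z_\ell)_{\ell\ge 1}$ of the corresponding point configuration are i.i.d.\ geometric with mean $2$; for Lebesgue-a.e.\ $x$ the first index $\ell^*(x)$ with $z_{\ell^*(x)}>j$ is finite, and for every $\pi\in V_j^{(a)}$ we get $p_\pi(x_1,\ldots,x_n)=0$ for all $n$ past the position $N_j(x)=z_0+\cdots+z_{\ell^*(x)}$. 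This yields pointwise convergence to $0$ together with the uniform bound $\sup_n f_{x,n}(\pi)\le 2^{N_j(x)}$ on $V_j^{(a)}$. To exhaust $\Omega_{\mathrm{III}}$ we enlarge to $V_j=V_j^{(a)}\cup V_j^{(b)}$, with $V_j^{(b)}$ capturing the unbounded-support distributions (e.g.\ $\{\pi\in\Omega_{\mathrm{III}}:\mathrm P_\pi(Z_1=k)\le j\cdot 2^{-k}\text{ for all }k\}$). On $V_j^{(b)}$, pointwise convergence $f_{x,n}(\pi)\to0$ is provided by singularity of $F_\pi$ together with Proposition~\ref{prop:useful:derivative} applied conditionally on $\Pi=\pi$, and the ergodic-theorem identity $\tfrac1n\log(2^np_\pi(x_1,\ldots,x_n))\to -\tfrac12 D(g\|\mathcal L_\pi)$ (with $\mathcal L_\pi$ the law of $Z_1$ under $\mathrm P_\pi$, $g$ geometric with mean $2$, and $D$ the Kullback-Leibler divergence) forces exponential decay, since $D(g\|\mathcal L_\pi)>0$ whenever $F_\pi\neq F_1$.

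The main obstacle is securing the uniform-in-$n$ dominating constant on $V_j^{(b)}$: the eventual-vanishing argument used on $V_j^{(a)}$ does not apply to unbounded-support distributions, and the individual likelihood-ratio factors $\mathrm P_\pi(Z_1=z_\ell)\cdot 2^{z_\ell}$ can exceed $1$, so $f_{x,n}(\pi)$ need not be bounded by a $\pi$-independent constant for small $n$. The plan is either a Doob-type maximal-inequality argument applied to the non-negative martingale $(2^np_\pi(X_1,\ldots,X_n))_{n\ge1}$ under Lebesgue, or a further refinement of $V_j^{(b)}$ together with a coupling reducing it to the truncated case $V_{j'}^{(a)}$ for large $j'$. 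Once the $V_j$'s are arranged to meet both requirements of Proposition~\ref{cor:G1G2}, the proposition delivers $F'(x)=\mathrm P(F_\Pi=F_1)$ for Lebesgue-a.e.\ $x\in[0,1]$, completing (III).
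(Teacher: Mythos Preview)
Your handling of (I) and (II) is correct and coincides with the paper's. For (III) you rightly target Proposition~\ref{cor:G1G2} and you correctly identify the Kullback--Leibler structure behind the problem, but the proof is incomplete exactly where you concede it is: the uniform-in-$n$ domination on $V_j^{(b)}$ is not established, and neither proposed fix works. A Doob maximal inequality bounds $\sup_n 2^n p_\pi(X_1,\ldots,X_n)$ for a \emph{fixed} $\pi$ as a random variable in $x$; it does not yield a bound uniform over $\pi\in V_j^{(b)}$ for almost every fixed $x$, which is what Proposition~\ref{cor:G1G2} demands. Your tentative $V_j^{(b)}=\{\pi:\pi_k\le j\,2^{-k}\text{ for all }k\}$ also fails to exhaust the unbounded-support distributions (any $\pi$ with $\sup_k\pi_k 2^k=\infty$ lies in no $V_j^{(b)}$), and even on this set the crude bound $2^n p_\pi\lesssim j^{m_n}$ diverges. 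There is a further subtlety in your pointwise-convergence argument: invoking singularity of $F_\pi$ via Proposition~\ref{prop:useful:derivative} gives a $\pi$-dependent exceptional set of $x$'s, whereas Proposition~\ref{cor:G1G2} requires a single Lebesgue-null set that works for all $\pi\in V_j$.

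The paper's construction dispenses with the bounded/unbounded split and with the separate treatment of $\Omega_{\mathrm{II}}$. Set $\phi_m(\pi)=2\prod_{k=1}^m\pi_k^{2^{-k-1}}$ and $\phi=\lim_m\phi_m$; strict log-concavity shows the geometric law $\pi^*$ is the unique maximiser of $\phi$, with $\phi(\pi^*)=1$. The filtration is
\[
V_j=\bigl\{\pi:\ \phi_m(\pi)\le 2^{-1/m}\ \text{for every}\ m\ge j\bigr\},
\]
a condition on \emph{all} finite truncations from level $j$ onward rather than on $\phi$ itself. One checks $\bigcup_j V_j=\Omega\setminus\{\pi^*\}$. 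Restricting to normal $x$ (a fixed full-measure set), the empirical gap frequencies satisfy $n_k(x_1,\ldots,x_n)/n\to 2^{-k-1}$, so there is $n_j=n_j(x)$ with $n_k/n\ge\tfrac{2j}{2j+1}\,2^{-k-1}$ for all $k\le j$ and $n\ge n_j$; this forces
\[
2\prod_{k\ge 1}\pi_k^{\,n_k/n}\ \le\ \bigl(2\,\phi_j(\pi)^{2j}\bigr)^{1/(2j+1)}\ \le\ 2^{-1/(2j+1)}<1
\]
uniformly for $\pi\in V_j$, giving both pointwise decay to $0$ and the dominating constant $2^{n_j(x)}$. The idea you are missing is precisely this: defining $V_j$ through the truncated products $\phi_m$, rather than through support bounds or pointwise tail conditions on $\pi_k$, is what upgrades the KL-type decay into a bound uniform in $\pi$.
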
 

\begin{example}[Mixture of special renewal processes]\label{ex:Renewalbeta} 
Let $\Pi$ be beta distributed 
and suppose that conditioned on $\Pi=\pi\in(0,1)$, we have a renewal process as considered in Example~\ref{ex:6}, i.e., $F_\pi$ is given by \eqref{eq:renewal_closed_form}. Then along similar lines as in Example~\ref{ex:MCbeta} but now applying Theorem~\ref{prop:b111}, we see that $F$ is singular continuous and strictly increasing  on $[0,1]$. However, evaluating the expected value of $F_\Pi$ leads to a complicated expression, which is omitted here. 
\end{example}

\section{Concluding remarks and open problems}\label{s:conclusion}
 
As pointed out in Section~\ref{s:intro}, singular continuous functions have been of much interest for many years and in \cite{part1} they appeared in connection to a decomposition result for the CDF of a random variable with stationary digits, considering different bases. In
Sections~\ref{s:main results1}--\ref{s:mixture-general} we provided general results and several examples of such singular continuous CDFs, where some examples were well-known and most were new. To obtain expressions of these CDFs, the finite dimensional distributions of the 
sequence of digits 
should be expressible in closed form as demonstrated in Examples~\ref{ex:3}--\ref{ex:MCbeta}.
Indeed many more examples could be given in the stationary case, 
but we leave this for future research.

In the binary case $q=2$ and when the digits are IID, the probability distribution of $X$ is related to Bernoulli convolutions, where in particular the absolute continuity or singularity and the Hausdorff dimension of the support of the distribution are of much interest, cf.\ Section~1.3 in \cite{part1} and the references therein. In fact, for any integer $q\ge2$, it is well-known that the probability measures corresponding to $X$'s with IID digits are mutually singular, cf.\ \cite{Billingsley}, and also the Hausdorff dimensions of the supports of such measures are known, see \cite{Yuval} and \cite{Varju}. To the best of our knowledge, a similar study
for other interesting model classes 
of the digits under stationarity remains to be done. Examples could be Markov chains (of any order), renewal processes, and mixtures of such models, where we have showed that for Markov chains and renewal processes $F$ is of pure type, whilst this is not necessarily the case if we consider mixtures of Markov chains or of renewal processes. 

 If we let 
$\{X_n\}_{n\in\mathbb Z}$ be a stochastic process with state space $\{0,\ldots,q-1\}$ and define
\[X_+\coloneqq\sum_{n=1}^\infty X_nq^{-n},\qquad X_-\coloneqq\sum_{n=0}^{\infty}X_{-n}q^{-n-1},\]
we may consider the bivariate CDF for $(X_+,X_-)$ which is concentrated on the unit square. What would be the general structure of this bivariate CDF when $\{X_n\}_{n\in\mathbb Z}$ is stationary, and which properties would the bivariate CDF possess under specific model classes for $\{X_n\}_{n\in\mathbb Z}$?
 Of course, the structure of the marginal CDFs of $X_+$ and $X_-$ are characterized by the results in \cite{part1}, and the examples of models considered in the present paper may easily be extended to stationary reversible processes $\{X_n\}_{n\in\mathbb Z}$. In particular,
  if $\ldots,X_{-1},X_{0},X_1,\ldots$ are IID, then $X_+$ and $X_-$ are independent. However, for other  
  cases $X_+$ and $X_-$ will in general be dependent.
We also defer such cases for future research.

It would also be interesting to study 
non-stationary models for the digits. For example, Minkowski’s question-mark function restricted to $[0,1]$ is a strictly increasing singular continuous CDF which does not satisfy the stationarity condition \eqref{e:3} for any integer $q\ge2$, cf.\ Corollary~2.17 in \cite{part1}. 
Further, in \cite{Wen1998} a Markov chain model (of order 1) for the random digits 
was considered without assuming stationarity and with $F$ being either the uniform CDF on $[0,1]$ or singular continuous (in agreement with Theorem~\ref{prop:markov_singular}). However, 
the assumptions in \cite{Wen1998} were more restrictive than in Section~\ref{s:MC-general} in assuming that all initial and transition probabilities are strictly positive, in contrast to our examples in Section~\ref{s:MC-general}, no `relatively closed formula' for 
the CDF was specified in \cite{Wen1998}. 

\section*{Acknowledgements}
This work was supported by The Danish Council for Independent Research | Natural Sciences, grant DFF – 10.46540/2032-00005B.

\section*{Appendix}

In this appendix we verify the theorems, the propositions, and the corollary in Sections~\ref{s:main results1}--\ref{s:mixture-general} which remain to be proven, and we establish some related results. It is convenient to introduce the notation 
\[(0.x_1\dots x_n)_q\coloneqq (0.x_1\dots x_n 0 0 \dots)_q\]
for $x_1,\dots,x_n\in \{0,\dots,q-1\}$.

\subsection*{A1 Proof of Proposition~\ref{t:useful}}\label{A:A.1}


Let $x=(0.x_1x_2\ldots)_q\in[0,1]$, where the non-terminating expansion is chosen if $x\in\mathbb Q_q$. Then $y=(0.y_1y_2\dots)_q$ is strictly less than $x$ if and only if $y_n<x_n$ for the first index $n$ where $y_n\neq x_n$, and thus
by the law of total probability, 
\begin{align*}
F(x)-\mathrm P\left(X=x\right)&=\mathrm P\left(\sum_{n=1}^\infty X_nq^{-n}<\sum_{n=1}^\infty x_nq^{-n}\right)\\
&=\sum_{n=1}^\infty\mathrm P(X_1=x_1,\ldots,X_{n-1}=x_{n-1},X_n<x_n)\\
&=\sum_{n\ge1:\,x_n\ge1}\sum_{k=0}^{x_n-1}p(x_1,\ldots,x_{n-1},k).
\end{align*}
Hence \eqref{e:F-id1} is verified and \eqref{e:F-id1} implies \eqref{e:F-id}.  


\subsection*{A2 Proof of Proposition~\ref{prop:useful:cont}}

 Let $x=(0.x_1x_2\ldots)_q\in[0,1]\setminus\mathbb Q_q$ (by stationarity, $F$ is continuous at $x$ if $x\in\mathbb Q_q$,
cf.\ Remark~\ref{rem:fractions}). Then, by monotonicity of probabilities, $$\mathrm P(X=x)=\lim_{n\rightarrow\infty}p(x_1,\ldots,x_n),$$
and so Proposition~\ref{prop:useful:cont} follows immediately.

\subsection*{A3 Proof of Proposition~\ref{prop:useful:derivative}}

The first part of Proposition~\ref{prop:useful:derivative} follows immediately from the mixture representation of $F$ as given by (I)--(III) in Section~\ref{s:setting}, where $c$ is the probability of obtaining the case (I) (i.e., $F=cF_1+(1-c)\tilde F$ where $\tilde F$ is a singular CDF on $[0,1]$).
 For the second part of Proposition~\ref{prop:useful:derivative} 
we only consider the case $m\geq 1$ as the case $m=0$ follows from similar arguments. Then the following lemma will be useful.


\begin{lemma}\label{lem:diff}
		Let $A\subset \mathbb{R}$ and suppose that $f\colon A\mapsto \mathbb{R}$ is differentiable at $x\in A$ and  
		$\{a_n\}_{n\ge1},\{b_n\}_{n\ge1}$ $\subset A\setminus\{x\}$ are sequences converging to $x$ such that there exists $c>0$ with $\vert a_n-b_n\vert \geq c\max\{\vert a_n-x\vert,\vert b_n-x\vert\}$ for all $n\in \mathbb{N}$. Then 
		\begin{align*}
		\lim_{n\to \infty} \frac{f(a_n)-f(b_n)}{a_n-b_n} =f'(x).
		\end{align*}
	\end{lemma}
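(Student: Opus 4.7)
The plan is to write $\frac{f(a_n)-f(b_n)}{a_n-b_n}$ as a linear combination of the two ordinary difference quotients $\frac{f(a_n)-f(x)}{a_n-x}$ and $\frac{f(b_n)-f(x)}{b_n-x}$, each of which tends to $f'(x)$ by assumption, and then to show that the coefficients of the combination are bounded thanks to the hypothesis $|a_n-b_n|\ge c\max\{|a_n-x|,|b_n-x|\}$. The algebraic identity I would use is
\begin{equation*}
\frac{f(a_n)-f(b_n)}{a_n-b_n} \;=\; \frac{a_n-x}{a_n-b_n}\cdot\frac{f(a_n)-f(x)}{a_n-x} \;-\; \frac{b_n-x}{a_n-b_n}\cdot\frac{f(b_n)-f(x)}{b_n-x},
\end{equation*}
which is a straightforward rewriting after adding and subtracting $f(x)$ in the numerator.

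Next I would observe the trivial identity $\frac{a_n-x}{a_n-b_n}-\frac{b_n-x}{a_n-b_n}=1$, which lets me subtract $f'(x)$ from both sides above to obtain
\begin{equation*}
\frac{f(a_n)-f(b_n)}{a_n-b_n}-f'(x) \;=\; \frac{a_n-x}{a_n-b_n}\left(\frac{f(a_n)-f(x)}{a_n-x}-f'(x)\right) -\frac{b_n-x}{a_n-b_n}\left(\frac{f(b_n)-f(x)}{b_n-x}-f'(x)\right).
\end{equation*}
By the assumed lower bound on $|a_n-b_n|$, the two coefficients $\frac{a_n-x}{a_n-b_n}$ and $\frac{b_n-x}{a_n-b_n}$ are each bounded in absolute value by $1/c$, uniformly in $n$. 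Meanwhile, since $a_n,b_n\to x$ with $a_n,b_n\ne x$ and $f$ is differentiable at $x$, each of the two bracketed expressions tends to $0$ as $n\to\infty$. Multiplying a bounded sequence by a null sequence yields a null sequence, so the right-hand side tends to $0$, finishing the proof.

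There is no real obstacle here: the whole content of the lemma is in the identity above together with the elementary fact that bounded times null is null. The only mildly delicate point is making sure the ratios $\frac{a_n-x}{a_n-b_n}$ and $\frac{b_n-x}{a_n-b_n}$ are well defined, which is guaranteed because the hypothesis forces $a_n\ne b_n$ (otherwise $\max\{|a_n-x|,|b_n-x|\}$ would have to be zero, contradicting $a_n,b_n\ne x$) and simultaneously provides the uniform upper bound $1/c$ on their moduli.
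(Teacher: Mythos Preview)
Your proof is correct and follows essentially the same approach as the paper: add and subtract $f(x)$ in the numerator, split into two difference quotients tending to $f'(x)$, and bound the resulting coefficients by $1/c$ using the hypothesis. The paper packages the argument directly as a chain of inequalities rather than writing out the algebraic identity first, but the content is identical.
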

	\begin{proof} 
		We have
		\begin{align*}
		&\Big\vert \frac{f(a_n)-f(b_n)}{a_n-b_n} -f'(x)\Big\vert 
		\le\\
		&\frac{\vert f(a_n)-f(x)-(a_n-x)f'(x)\vert+ \vert f(x)-f(b_n)-( x-b_n)f'(x)\vert}{\vert a_n-b_n\vert }
		\le\\
		 &\frac{1}{c} \Big\vert \frac{f(a_n)-f(x)}{a_n-x}-f'(x)\Big\vert+\frac{1}{c}\Big\vert \frac{f(x)-f(b_n)}{x-b_n}-f'(x)\Big\vert,
		\end{align*}
		where the first inequality follows from the triangle inequality and the second from	
		the assumption on $\vert a_n-b_n\vert$. 		
		 As the right hand side above goes to 0 for $n\to \infty$, the proof is complete.
	\end{proof}

 Note that in Lemma~\ref{lem:diff} it does not matter from which side the sequences $\{a_n\}_{n\geq 1}$ and $\{b_n\}_{n\geq 1}$ approach $x$. Thus, letting $n\in \mathbb{N}$ and $x$ be as in Proposition~\ref{prop:useful:derivative}, we define
 $a_n=(0.x_1\dots x_n \xi_1\dots \xi_m)_q+q^{-n-m}$ and $b_n=(0.x_1\dots x_n \xi_1\dots \xi_m)_q$. Observe that  $a_n,b_n\in[0,1]\setminus\{x\}$. Then
\begin{equation*}
	q^{-m}\max\{\vert a_n-x\vert,\vert b_n-x\vert\}\leq q^{-n-m} =\left\vert a_n-b_n\right\vert,
\end{equation*}
and hence by Lemma~\ref{lem:diff},
\[F'(x)=\lim_{n\to\infty} \frac{F(a_n)-F(b_n)}{a_n-b_n}.\]
Thus, since $F$ is continuous at base-$q$ fractions and both $a_n\in\mathbb Q_q$ and $b_n\in\mathbb Q_q$ for sufficiently large $n$, we get
\[F'(x)=\lim_{n\to\infty}q^{n+m}\left(\mathrm P(X<a_n)-\mathrm P(X<b_n)\right)=\lim_{n\to\infty} q^{n+m} p(x_1,\dots,x_m,\xi_1,\dots,\xi_m),
\]
whereby the proof is completed.

\subsection*{A4 Proof of Corollary~\ref{cor:useful:derivatives}}

We only prove Corollary~\ref{cor:useful:derivatives} for $m>1$, since the cases $m=0$ and $m=1$ follow from similar arguments. By Proposition~\ref{prop:useful:derivative}, 
\[\lim_{n\to\infty} q^{n+m-1}p(x_1,\dots,x_n,\xi_1,\dots,\xi_{m-1})=F'(x)>0.\]
This implies $p(x_1,\dots,x_n,\xi_1,\dots,\xi_m)>0$ for $n$ sufficiently large. Since also 
$$\lim_{n\rightarrow\infty}q^{n+m}p(x_1,\dots,x_n,\xi_1,\dots,\xi_m)=F'(x),$$ 
 we have 
\begin{equation*}
	\lim_{n\to\infty} \frac{p(x_1,\dots,x_n,\xi_1,\dots,\xi_m)}{p(x_1,\dots,x_n,\xi_1,\dots,\xi_{m-1})}=q^{-1}\lim_{n\to\infty}\frac{q^{n+m}p(x_1,\dots,x_n,\xi_1,\dots,\xi_m)}{q^{n+m-1}p(x_1,\dots,x_n,\xi_1,\dots,\xi_{m-1})}=q^{-1}.
\end{equation*}
Thereby \eqref{e:F'2} is verified. 

\subsection*{A5 Proof of Proposition~\ref{prop:useful:increasing}}

The proof is straightforward when considering each of the cases $x=0$, $x=1$, $x\in\mathbb Q_q$, and $x\in (0,1)\setminus\mathbb Q_q$. For instance, suppose $x=(0.x_1 x_2\dots)_q\in(0,1)\setminus\mathbb Q_q$. For $n\in \mathbb{N}$, define $y_n\coloneqq(0.x_1\dots x_n)_q$ and $z_n\coloneqq y_n+q^{-n}$. 
Then $y_n<x< z_n$ at least for sufficiently large $n$, and since $y_n\in\mathbb Q_q$ and $z_n\in\mathbb Q_q$, stationarity implies $\mathrm P(X=y_n)=\mathrm P(X=z_n)=0$, and so $F(z_n)-F(y_n)=p(x_1,\dots,x_n)$. Thereby Proposition~\ref{prop:useful:increasing} is verified in the case where $x=(0.x_1 x_2\dots)_q$ is a non-base-$q$ fraction in $(0,1)$.

\subsection*{A6 Proof of Theorem~\ref{prop:markov_singular}}

Assume $\{X_n\}_{n\ge1}$ is a stationary Markov chain of order $m-1$ for which $F$ is not the uniform CDF on $[0,1]$. Then it follows from \eqref{e:equil} that there must exist $\xi_1,\dots,\xi_{m}\in \{0,\dots,q-1\}$ such that $\pi_{\xi_1,\dots,\xi_{m}}\neq q^{-1}$. Let $x=(0.x_1x_2\dots)_q\in (0,1)\setminus\mathbb Q_q$ where $F'(x)$ exists. If $F'(x)>0$, then we obtain a contradiction: 
\begin{equation*}
	q^{-1}=\lim_{n\to\infty} \frac{p(x_1,\dots,x_n,\xi_1,\dots,\xi_m)}{p(x_1,\dots,x_n,\xi_1,\dots,\xi_{m-1})}= \pi_{\xi_1,\dots,\xi_m}\neq q^{-1},
\end{equation*}
where the first equality follows from Corollary~\ref{cor:useful:derivatives} and the second from the Markov property.
Consequently, $F'(x)=0$ and thus $F$ is singular.

	Using a notation as in Section~\ref{s:MC-general}, assume that $\pi_{x_1,\ldots,x_m}<1$ for all $x_1,\dots,x_m\in \{0,\dots, q-1\}$. 
	Consider any $x=(0.x_1x_2\dots)_q\in [0,1]$ and 
	define 
	\begin{align*}
	    \lambda\coloneqq\max_{b_1,\dots,b_m\in \{0,1,\dots,q-1\}}\pi_{b_1,\ldots,b_m} <1.
	\end{align*}
	Then, for any integer $n\geq m$, \eqref{e:Markovfinitedimdist} gives
\[p(x_1,\ldots,x_n)\le \lambda^{n-m+1}\rightarrow0\qquad\mbox{as $n\rightarrow\infty$.}\]
 Hence, Proposition~\ref{prop:useful:cont} gives that $F$ is continuous at $x$. This completes the proof of Theorem~\ref{prop:markov_singular}.	
 
\subsection*{A7 Proof of Proposition~\ref{prop-horiac}}\label{s:6.5.2}

For any $m\in\mathbb N$, denote the finite dimensional probabilities of $\{X_n^{(m)}\}_{n\geq 1}$ by
\[ p_m(x_1,\dots,x_n)=\mathrm P(X_1^{(m)}=x_1,\dots, X_n^{(m)}=x_n)\]
for $n\in \mathbb{N}$ and $x_1,\ldots,x_n\in\{0,\ldots,q-1\}$. 
By construction of $\{X_n^{(m)}\}_{n\geq 1}$, we have \(p_m(x_1,\dots,x_n)$ $=p(x_1,\dots,x_n)\)
whenever $n\le m$. For any $x=(0.x_1\ldots x_m)_q$, 
stationarity implies that $\mathrm P(X=x)=0$, and so 
it follows from \eqref{e:F-id1} that
\begin{equation}\label{eq:fmf3}
    F^{(m)}( (0.x_1\dots x_m)_q)=F( (0.x_1\dots x_m)_q).
\end{equation}
Let $x=(0.x_1x_2\dots)_q\in [0,1]$ be arbitrary. Combining \eqref{eq:fmf3} with the fact that $F^{(m)}$ is non-decreasing gives 
\[F((0.x_1\dots x_m)_q\leq F^{(m)}(x) \leq F((0.x_1\dots x_m)_q+q^{-m}).\]
Here, by the continuity of $F_3$, the left and the right hand side expressions of the inequalities converge to $F(x)$ as $m\rightarrow\infty$, 
so $F^{(m)}$ converges pointwise to $F$ (weak convergence). Hence, since $F$ is a continuous CDF, we obtain
\eqref{e:F3approx}, cf.\ \cite{10.2307/2237541}.

\subsection*{A8 Proof of Theorem~\ref{prop:renewal_singular}}\label{s:6.5.3}

 Let the situation be as in Theorem~\ref{prop:renewal_singular}. 
 
 The case (I) follows immediately since then the $X_n$'s are
  independent and uniformly distributed on $\{0,1\}$.

Suppose that $\mathrm P(Z_1=k)=1$ for some $k\in\mathbb N$. Then $Z_0$ is uniformly distributed on $\{1,\ldots,k\}$. If $\ell\in\{1,\ldots,k\}$ and $x=(0.x_1x_2\dots)_2=2^{-\ell}/(1-2^{-k})$, then  $x= \sum_{m=0}^\infty 2^{-\ell-km}$ and we get from \eqref{eq:renewal_probability} that 
$p(x_1,\dots,x_n)=1/k$. Thereby the case (II) is verified.

To show the case (III)  
assume first that $Z_1$ is not geometrically distributed with mean 2. Equivalently,  
\[ 
\mathrm P(Z_1>m\mid Z_1>m-1)\neq{1}/{2}
\] 
for some $m\in \mathbb{N}$. We show by contradiction that $F'(x)=0$ for all $x\in(0,1)\setminus\mathbb Q_2$ where $F'(x)$ exists. So
suppose that $F'(x)>0$ for some $x=(0.x_1x_2\dots)_2\in (0,1)\setminus\mathbb Q_2$. Let $\xi_1=1$ and $\xi_j=0$ for $j\in \{2,\dots,m+1\}$.
Then,  for any $n\in \mathbb{N}$, 
\begin{equation*}
	\frac{p(x_1,\dots,x_n,\xi_1,\dots,\xi_{m+1})}{p(x_1,\dots,x_n,\xi_1,\dots,\xi_m)}=\frac{\rmP(Z_1>m)}{\rmP(Z_1>m-1)}=\mathrm P(Z_1>m\mid Z_1>m-1)\neq{1}/{2}
\end{equation*}
using in the first identity \eqref{eq:renewal_probability} and that $Z_1,Z_2,\dots$ are identically distributed.
 This contradicts Corollary~\ref{cor:useful:derivatives} and thus $F'(x)=0$. In conclusion, $F$ is singular if $Z_1$ is not geometrically distributed with mean 2.
 
Suppose next that $\rmP(Z_1=k)<1$ for all $k\in \mathbb{N}$.
If $x\in [0,1]$ has finitely many digits equal to 1, then $\mathrm P(X=x)=0$. If $x$ has infinitely many digits equal to 1, then \eqref{eq:renewal_probability} gives
\[\lim_{n\rightarrow\infty}p(x_1,\ldots,x_n)\le\prod_{n:\ x_n=1} \sup_{m\in\mathbb N}\mathrm P(Z_1=m)=0,\]
since $\sup_{n\in\mathbb N}\mathrm P(Z_1=n)<1$. 
Hence, by Proposition~\ref{prop:useful:cont}, $F$ is continuous. 

Consequently, $F$ is singular continuous in case (III).

Finally, Theorem~\ref{prop:renewal_singular}(IV) follows immediately from Proposition~\ref{prop:useful:increasing} using \eqref{eq:renewal_probability} and the definition of the distribution of $Z_0$.

\subsection*{A9 Proof of Proposition~\ref{cor:G1G2}}

Let the situation be as in Proposition~\ref{cor:G1G2} and let $G$ be given by \eqref{eq:G}. By Proposition~\ref{prop:mixture-general}(V), there exists $c\in [0,1]$ such that 
$c=G'(x)=F'(x)-\rmP(F_\Pi=F_1)$ for  Lebesgue almost all $x\in [0,1]$. We will show that $c=0$ by contradiction, whereby 
Proposition~\ref{cor:G1G2} is verified. 

Suppose that $c>0$. 
For any $j\in \mathbb{N}$, define $U_j\coloneqq \{\pi\in \Omega\mid F_\pi\neq F_1\}\setminus V_j$. Since $V_j$ increases to $\{\pi\in \Omega \mid F_\pi\neq F_1\}$, there exists $j_0\in \mathbb{N}$ such that $P(\Pi\in U_{j_0})<c$. For $x\in \mathbb{R}$, define
\[J(x)\coloneqq\mathrm{E}(F_\Pi(x)1[\Pi\in U_{j_0}]),\qquad K(x)\coloneqq\mathrm{E}(F_\Pi(x)1[\Pi\in V_{j_0}]).\] 
Then $G=J+K$. Later on in this proof we will show that  $K$, if not identically zero, must at least be singular, which implies that for Lebesgue almost all $x\in [0,1]$, $J'(x)=G'(x)=c>0$, hence by the definition of $J$, we have $\rmP(\Pi\in U_{j_0})>0$. The function $J/\rmP(\Pi\in U_{j_0})$ is a CDF satisfying \eqref{e:3}, so by  Proposition~\ref{prop:mixture-general}(V), $c/\rmP(\Pi\in U_{j_0})=J'/ \rmP(\Pi\in U_{j_0})\leq 1$  almost everywhere on $[0,1]$. This is in contradiction with $\rmP(\Pi\in U_{j_0})<c$, 
	so $c=0$. 
	
Now let us show that $K'=0$ almost everywhere. Clearly, we may assume that  
 $\rmP(\Pi\in V_{j_0})>0$ so that $K$ is not identically zero. Then $K/\rmP(\Pi\in V_{j_0})$ is a CDF satisfying \eqref{e:3} and therefore it is differentiable almost everywhere. Let $A$ denote the set of points $x\in [0,1]$ for which both $K'(x)$ exists and the sequence $\{f_{x,n}(\cdot)1[\cdot\in V_{j_0}]\}_{n\geq 1}$ converges pointwise to $0$ as $n\to\infty$ and is dominated by a constant. Then $A$ has Lebesgue measure $1$, since, by assumption $A$ is the intersection of two sets of Lebesgue measure 1. Hence we can combine Proposition~\ref{prop:mixture-general}(V) with \eqref{e:mixture-p} and Lebesgue's dominated convergence theorem to obtain 
		\begin{equation*}
			K'(x)/\rmP(\Pi\in V_{j_0})=\lim_{n\to\infty} \mathrm{E}(q^n p_\Pi(x_1,\dots,x_n)1[\Pi\in V_{j_0}])/\rmP(\Pi\in V_{j_0})=0,
		\end{equation*}
		for all $x=(0.x_1x_2\dots)_q\in A$. Thus $K$ is singular.

\subsection*{A10 Proof of Theorem~\ref{prop:b1}}\label{a:proof-mix-MC}

Theorem~\ref{prop:b1}(I)--(II) follow directly from Proposition~\ref{prop:mixture-general}(III)--(IV). 

When verifying Theorem~\ref{prop:b1}(III) we assume $m\ge2$ and define without loss of generality $\Omega$ as follows (the case $m=1$ is simpler and follow similar lines as below). Denote  
the standard simplex in $\mathbb{R}^n$ by
\[\Delta_n\coloneqq \{(\alpha_1,\dots,\alpha_{n})\in [0,1]^n\mid \alpha_1+\dots+\alpha_{n}=1\}.\]
Consider the initial distribution $\{\pi(x_1,\dots,x_{m-1})\mid x_1,\ldots,x_{m-1}\in\{0,\ldots,q-1\}\}$ as a vector in $\Delta_{q^{m-1}}$ and the collection of transition probabilities $\{\pi_{x_1,\dots,x_m}\mid x_1,\ldots,x_m\in\{0,\ldots,q-1\}\}$ as a $q^{m-1}$-tuple of vectors in $\Delta_{q}$. Then $\Delta_{q^{m-1}}\times \Delta_{q}^{q^{m-1}}$ can be identified with the collection of distributions for Markov chains with state space $\{0,\ldots,q-1\}$ and of order $m-1$. Let $\Omega$ be the subset of those elements of $\Delta_{q^{m-1}}\times \Delta_{q}^{q^{m-1}}$ which correspond to stationary Markov chains of order $m-1$, and let the $\sigma$-algebra on $\Omega$ be induced by the Borel $\sigma$-algebra on $\mathbb{R}^{q^{m-1}+q^{m}}$.

We now verify the requirements of Proposition~\ref{cor:G1G2} whereby Theorem~\ref{prop:b1}(III) follows. Define 
\begin{equation*}
	\phi(\pi)\coloneqq q\prod_{t_1,\dots,t_{m}\in \{0,\ldots,q-1\}}\pi_{t_1,\dots,t_{m}}^{q^{-m}},\qquad \pi\in\Omega,
\end{equation*}
and let
\begin{equation}\label{eq:vn}
	V_j=\{\pi\in \Omega\mid \phi(\pi)\leq q^{-1/j}\},\qquad j\in\mathbb N.
\end{equation}
Then $V_1\subseteq V_2\subseteq\dots$.
Further, let $\pi^*\in\Omega$ correspond to having all initial probabilities equal to $q^{-m}$ and all transition probabilities equal to $q^{-1}$ (so $F_{\pi^*}$ is the uniform CDF on $[0,1]$). Then $\pi^*$ is the unique maximizer of $\phi$ and $\phi(\pi^*)=1$. This follows e.g.\ with the use of Lagrange multipliers. Furthermore, it follows from \eqref{e:equil} that $F_\pi=F_1$ if and only if $\pi=\pi^*$. Hence
\[\bigcup_{j=1}^\infty V_j=\Omega\setminus\{\pi^*\}=\{\pi\in \Omega\mid F_\pi\neq F_1\},\]
which is one requirement of Proposition~\ref{cor:G1G2}.

To verify the other requirement of Proposition~\ref{cor:G1G2} we need some notation.
For any $x=(0.x_1x_2\ldots)_q\in[0,1]\setminus\mathbb Q_q$ and any $n\in\mathbb N$, define 
\begin{equation*}
		\phi_{x,n}(\pi)\coloneqq q\prod_{t_1,\dots,t_{m}\in \{0,\ldots,q-1\}}\pi_{t_1,\dots,t_{m}}^{n_{t_1,\dots,t_{m}}(x_1,\dots,x_n)/n},\qquad \pi\in\Omega,
	\end{equation*}
where $n_{t_1,\dots,t_j}(x_1,\dots,x_k)$ is the number of times the string $t_1,\dots,t_j$ appears in the string $x_1,\dots,x_k$.	
Then the other requirement of Proposition~\ref{cor:G1G2}
states that for all $j\in\mathbb N$, Lebesgue
almost all $x=(0.x_1x_2\ldots)_q\in[0,1]\setminus\mathbb Q_q$, and all 
$\pi\in\Omega$ we have that
\begin{align*}
&q^n \pi(x_1,\ldots,x_{m-1})\prod_{j=m}^n \pi_{x_{j-m+1},\ldots,x_j} 1[\phi(\pi)\le q^{-1/j}]\\
= &\, \pi(x_1,\ldots,x_{m-1}) \phi_{x,n}(\pi)^n1[\phi(\pi)\le q^{-1/j}]
\end{align*}
converges to 0 as $n\rightarrow\infty$ and is less than some number $c(x,j)$. 
To verify this   
we recall that $(0.x_1x_2\dots)_q\in [0,1]$ is a \emph{normal number} (in base $q$) if for all $j\in\mathbb N$ and all $t_1,\dots,t_j\in\{0,\ldots,q-1\}$,
\begin{align}\label{eq:b3}
\lim_{k\to \infty} {n_{t_1,\dots,t_j}(x_1,\dots,x_k)}/{k}=q^{-j}.
\end{align} 
Since the set of normal numbers in $[0,1]$ has Lebesgue measure $1$~\cite{Borel1909}, we can assume that $x$ is normal. By \eqref{eq:b3} there exists some $n_j\in\mathbb N$ such that for all $t_1,\dots,t_m\in\{0,\dots,q-1\}$,
\[n_{t_1,\dots,t_m}(x_1,\dots,x_n)/n\geq \frac{2j}{(2j+1)q^m}\]
whenever $n\geq n_j$. Consequently, for all $n\geq n_j$ and $\pi\in V_j$, 
\begin{align*}	
	\phi_{x,n}(\pi)\leq q\prod_{t_1,\dots,t_{m}\in \{0,\ldots,q-1\}}\pi_{t_1,\dots,t_{m}}^{\frac{2j}{(2j+1)q^m}}= (q\phi(\pi)^{2j})^{1/(2j+1)}\leq q^{-1/(2j+1)}<1,
\end{align*}
where the second last inequality uses \eqref{eq:vn}. Thereby the other requirement of Proposition~\ref{cor:G1G2} follows. 

\subsection*{A11 Proof of Theorem~\ref{prop:b111}}\label{a:proofren}	

Theorem~\ref{prop:b111}(I)--(II) follow directly from Proposition~\ref{prop:mixture-general}(III)--(IV). Below we verify the requirements of Proposition~\ref{cor:G1G2} whereby Theorem~\ref{prop:b111}(III) follows.

Define without loss of generality 
\[\Omega=\left\{(\pi_1,\pi_2,\dots)\in \ell^1(\mathbb{N})\mid \sum_{k=1}^\infty \pi_k=1, \textup{ and }  \sum_{k=1}^\infty k\pi_k<\infty\right\}\]
and let the $\sigma$-algebra on $\Omega$ be induced by the Borel $\sigma$-algebra on $\ell^1(\mathbb{N})$ (the space of absolutely summable sequences) equipped with the usual $\ell^1$-norm. 
For any $m\in \mathbb{N}$ and $\pi\in\Omega$, define 
\begin{equation*}
	\phi_m(\pi)\coloneqq 2\prod_{k=1}^m\pi_k^{2^{-k-1}},
	\qquad \pi^{(m)}\coloneqq (1-2^{-m})^{-1} \left( 2^{-1},\dots,2^{-m},0,0,\dots \right),
\end{equation*}
and	
\[\phi(\pi)\coloneqq 2\prod_{k=1}^\infty\pi_k^{2^{-k-1}},
	\qquad \pi^*\coloneqq(1/2,1/4,1/8,\ldots)	,
\]
so $\pi^*\in\Omega$ is the geometric distribution with mean $2$. We now verify that
\begin{equation}\label{e:uniquemax}
\mbox{$\pi^*$ is the unique maximizer of $\phi$.}
\end{equation}
It is easily seen that $\pi^{(m)}\in\Omega$ is the unique maximizer of $\phi_m$, and since for all $\pi\in\Omega$, $\phi_m(\pi)$ non-increases towards $\phi(\pi)$ as $m\rightarrow\infty$, it follows that 
\[\phi(\pi)\le\lim_{m\rightarrow\infty}\phi_m\left(\pi^{(m)}\right)=\lim_{m\rightarrow\infty}2^{(m+2)/2^{m+1}}(1-2^{-m})^{(2^{-m}-1)/2}=1.\] 
Furthermore, $\Omega$ is a convex set and we claim that $\phi$ is a strictly log-concave function: Consider any $\alpha\in(0,1)$ and distinct $\pi,\tilde\pi\in\Omega$, so there exist some $\pi_\ell\not=\tilde\pi_\ell$. Since $\ln$ is strictly concave, we have $\ln(\alpha\pi_\ell+(1-\alpha)\tilde\pi_\ell)>\alpha \ln\pi_\ell+(1-\alpha)\ln\tilde\pi_\ell$ and $\ln(\alpha\pi_k+(1-\alpha)\tilde\pi_k)\ge\alpha \ln\pi_k+(1-\alpha)\ln\tilde\pi_k$ for $k\not=l$, so
\[\ln\phi(\alpha\pi+(1-\alpha)\tilde\pi)>\alpha\ln\phi(\pi)+ (1-\alpha)\phi(\tilde\pi).\] 
Consequently,  $\phi\le1$ is strictly log-concave 
 with
\[\phi\left(\pi^*\right)=2^{1-\sum_{k=1}^\infty k2^{-k-1}}=1,\]
and so \eqref{e:uniquemax} follows.

Now, for any $j\in \mathbb{N}$, define
\begin{equation}\label{eq:vj1}
	V_j\coloneqq\{\pi\in \Omega\mid \phi_m(\pi)\leq 2^{-1/m} \textup{ whenever } m\geq j\}.
\end{equation}
Then $V_1\subseteq V_2\subseteq\dots$. For all $m\in \mathbb{N}$,
 a direct calculation gives $\phi_m(\pi^*)=2^{(m+2)/2^{m+1}}>1$, so $\pi^*\notin \bigcup_{j=1}^\infty V_j$. On the other hand, 
suppose $\pi\in \Omega\setminus\pi\notin \bigcup_{j=1}^\infty V_j$. Then by \eqref{eq:vj1} we can find an increasing sequence $\{m_k\}_{k\geq 1}$ such that for all $k\in \mathbb{N}$, we have
$\phi_{m_k}(\pi)>2^{-1/m_k}$, and so taking the limit as $k\rightarrow$ we obtain $\phi(\pi)\ge1$. Therefore, by \eqref{e:uniquemax}, $\pi=\pi^*$, so 
\[\bigcup_{j=1}^\infty V_j=\Omega\setminus\{\pi^*\}.\]
From Theorem~\ref{prop:renewal_singular}(I) and the definition of $\pi^*$ we obtain that
$F_\pi=F_1$ if and only if $\pi=\pi^*$, and hence
\[\bigcup_{j=1}^\infty V_j=\{\pi\in \Omega\mid F_\pi\neq F_1\},\]
which is one requirement of Proposition~\ref{cor:G1G2}. 

For any $j\in \mathbb{N}$ and any  normal number $x=(0.x_1x_2\dots)_q\in [0,1]$ (cf.\ \eqref{eq:b3}), define
\[\phi_{x,n}(\pi)\coloneqq 2\prod_{k=1}^\infty\pi_k^{n_k(x_1,\ldots,x_n)/n},\qquad \pi\in \Omega.\] 
By \eqref{eq:b3} there exists some $n_j\in\mathbb N$ such that for all $k\in \{1,\dots,j\}$,
\begin{equation}\label{eq:nk}
	n_{k}(x_1,\dots,x_n)/n\geq \frac{2j}{(2j+1)2^{k+1}}
\end{equation}
whenever $n\geq n_j$. Hence, for all $n\geq n_j$ and all $\pi\in V_j$, 
\begin{align*}	
		\phi_{x,n}(\pi)\leq 2\prod_{k=1}^j\pi_{k}^{n_k(x_1,\dots,x_n)/n}\leq 2\prod_{k=1}^j\pi_{k}^{\frac{2j}{(2j+1)2^{k+1}}}= (2\phi_j(\pi)^{2j})^{1/(2j+1)}\leq 2^{-1/(2j+1)},
\end{align*}
where the first inequality uses that $\prod_{k=1}^m \pi_{k}^{n_k(x_1,\dots,x_n)/n}\downarrow \prod_{k=1}^\infty \pi_{k}^{n_k(x_1,\dots,x_n)/n}$ as $m\to\infty$, the second inequality uses \eqref{eq:nk}, and the last inequality uses \eqref{eq:vj1}. Therefore, for all $n\geq n_j$ and all $\pi\in V_j$, we have $\phi_{x,n}(\pi)<1$. Furthermore, by definition, 
for all $\pi\in V_j$ and $n\in \mathbb{N}$, we have
$\phi_{x,n}(\pi)^n\leq 2^n$. So, for any $\pi\in\Omega$, we get that $\{\phi_{x,n}^n1[\pi\in V_j]\}_{n\geq 1}$ is dominated by the constant $2^{n_j}$ and converges pointwise to $0$ as $n\to\infty$. Thereby the other requirement of Proposition~\ref{cor:G1G2} holds, and so the proof of Theorem~\ref{prop:b111} is completed.

\bibliographystyle{spbasic}      
\bibliography{bibliography}   

\end{document}